\newcounter{minutes}\setcounter{minutes}{\time}
\newcounter{hours}\setcounter{hours}{\time}
 \def\registered{
 {\ooalign{\hfil\raise .00ex\hbox{\scriptsize R}\hfil\crcr\mathhexbox20D}}}
\date{}
\newfont{\cyrilic}{wncyr10 scaled 1000}
\title{Inequalities for eigenfunctions of the $p$-Laplacian}
\author[B. A. Bhayo]{Barkat Ali Bhayo}
\address{Department of Mathematics, University of Turku,
FI-20014 Turku, Finland} \email{barbha@utu.fi}
\author[M. Vuorinen]{Matti Vuorinen}
\address{Department of Mathematics, University of Turku,
FI-20014 Turku, Finland} \email{vuorinen@utu.fi}
\theoremstyle{plain}
\newtheorem{theorem}[equation]{Theorem}
\newtheorem{lemma}[equation]{Lemma}
\newtheorem{remark}[equation]{Remark}
\newtheorem{subsec}[equation]{}
\newtheorem{conjecture}[equation]{Conjecture}
\numberwithin{equation}{section}
\newcommand*\patchAmsMathEnvironmentForLineno[1]{%
  \expandafter\let\csname old#1\expandafter\endcsname\csname #1\endcsname
  \expandafter\let\csname oldend#1\expandafter\endcsname\csname end#1\endcsname
  \renewenvironment{#1}%
     {\linenomath\csname old#1\endcsname}%
     {\csname oldend#1\endcsname\endlinenomath}}%
\newcommand*\patchBothAmsMathEnvironmentsForLineno[1]{%
  \patchAmsMathEnvironmentForLineno{#1}%
  \patchAmsMathEnvironmentForLineno{#1*}}%
\begin{document}
\font\fFt=eusm10 
\font\fFa=eusm7  
\font\fFp=eusm5  
\def\K{\mathchoice
{\hbox{\,\fFt K}}
{\hbox{\,\fFt K}}
{\hbox{\,\fFa K}}
{\hbox{\,\fFp K}}}
\font\fFt=eusm10 
\font\fFa=eusm7  
\font\fFp=eusm5  
\def\E{\mathchoice
{\hbox{\,\fFt E}}
{\hbox{\,\fFt E}}
{\hbox{\,\fFa E}}
{\hbox{\,\fFp E}}}
\maketitle

\begin{abstract}
Motivated by the work of P. Lindqvist, we study eigenfunctions
of the one-dimensional
$p$-Laplace operator, the $\sin_p$ functions, and prove several
inequalities for these and
$p$-analogues of other trigonometric functions and their inverse
functions. Similar inequalities
are given also for the $p$-analogues of the hyperbolic functions and
their inverses.
\end{abstract}

{\bf 2010 Mathematics Subject Classification}: 33C99, 33B99

{\bf Keywords and phrases}: Eigenfunctions of $p$-Laplacian, $\sin_p$ , generalized trigonometric function.
\vspace{.5cm}

\def\thefootnote{}
\footnotetext{
\texttt{\tiny File:~\jobname .tex,
          printed: \number\year-0\number\month-0\number\day,
          \thehours.\ifnum\theminutes<10{0}\fi\theminutes}
}
\makeatletter\def\thefootnote{\@arabic\c@footnote}\makeatother

\vspace{.5cm}

\section{\bf Introduction}


In a highly cited paper P. Lindqvist \cite{lp} studied generalized trigonometric
functions depending on a parameter $p>1$ which for the case $p=2$ reduce to the familiar functions.
Numerous later authors, see e.g. \cite{lpe}, \cite{bem0, bem}, \cite{dm} and
the bibliographies of these papers, have extended this work in various directions
including the study of generalized hyperbolic functions and their inverses.
Our goal here to study these $p$-trigonometric and $p$-hyperbolic functions and to prove several inequalities for them.

For the statement of some of our main results we introduce some
notation and terminology for classical special functions, such as
the classical \emph{gamma function} $\Gamma(x)$, the $\emph
{ psi function}$ $\psi(x)$ and the \emph{beta function} $B(x,y)$.
For ${\rm Re}\, x>0$, ${\rm Re}\, y>0$, these functions are defined by
$$\Gamma(x)=\int^\infty_0 e^{-t}t^{x-1}\,dt,\,\,\psi(x)=\frac{\Gamma^{'}(x)}{\Gamma(x)},\,\,
B(x,y)=\frac{\Gamma(x)\Gamma(y)}{\Gamma(x+y)},$$
respectively.

Given complex numbers $a,b$ and $c$ with $c\neq0,-1,-2,\ldots$,
the \emph{Gaussian hypergeometric function} is the
analytic continuation to the slit place $\mathbf{C}\setminus[1,\infty)$ of the series
$$F(a,b;c;z)={}_2F_1(a,b;c;z)=\sum^\infty_{n=0}\frac{(a,n)(b,n)}
{(c,n)}\frac{z^n}{n!},\qquad |z|<1.$$
Here $(a,0)=1$ for $a\neq 0$, and $(a,n)$ is the \emph{shifted factorial function}
 or the \emph{Appell symbol}
$$(a,n)=a(a+1)(a+2)\cdots(a+n-1)$$
for $n\in\mathbf{N}\setminus \{0\}$, where $\mathbf{N}=\{0,1,2,\ldots\}$.
The hypergeometric function has numerous special functions as its
special or limiting cases, see \cite{AS}.

We start by discussing
eigenfunctions of the so-called one-dimensional
 $p$-Laplacian $\Delta_p$ on $(0,1)$,\,
  $p\in(1,\infty).$ The eigenvalue problem \cite{dm}
$$
-\Delta_p u=-\left(|u^{'}|^{p-2}u^{'}\right)^{'}
=\lambda|u|^{p-2}u,\,u(0)=u(1)=0,
$$
has eigenvalues
$$\lambda_n=(p-1)(n \pi_p)^p,\,$$
and eigenfunctions
$$\sin_p(n \pi_p\, t),\,n\in\mathbb{N},\,$$
where $\sin_p$ is the inverse function of ${\rm arcsin}_p$\,, which is
defined below and
$$\pi_p=\frac{2}{p}\int^1_0(1-s)^{-1/p}s^{1/p-1}ds=\frac{2}
{p}\,B\left(1-\frac{1}{p},\frac{1}{p}\right)=\frac{2 \pi}{p\,\sin(\pi/p)}\,.$$

Motivated by P. Lindqvist's work,
P. J. Bushell and D. E. Edmunds \cite{be} found recently
many new results for these generalized trigonometric functions.
Some authors also considered various other $p$-analogues of trigonometric and hyperbolic
functions and their inverses. In particular, they considered the following
homeomorphisms
\begin{equation*}
\sin_p:(0,a_p)\to I,\,\cos_p:(0,a_p)\to I,\,\tan_p:(0,b_p)\to I,\,\\
\end{equation*}
\begin{equation*}
\sinh_p:(0,c_p)\to I, \, \tanh_p:(0,\infty)\to I\,,
\end{equation*}
where $I=(0,1)$ and
$$a_p=\frac{\pi_p}{2},
\,b_p=\frac{1}{2p}\left(\psi\left(\frac{1+p}{2p}\right)
-\psi\left(\frac{1}{2p}\right)\right)=2^{-1/p}
F\left(\frac{1}{p},\frac{1}{p};1+\frac{1}{p};\frac{1}{2}\right)\,,$$
$$c_p=\left(\frac{1}{2}\right)^{1/p}F\left(1\,,\frac{1}{p}
;1+\frac{1}{p}\,;\frac{1}{2}\right)\,.$$
For $x\in I$, their inverse functions are defined as
\begin{eqnarray*}
{\rm arcsin}_p\,x&=&\int^x_0(1-t^p)^{-1/p}dt=x\,F\left(\frac{1}{p},
\frac{1}{p};1+\frac{1}{p};x^p\right)\\
                 &=&x(1-x^p)^{(p-1)/p}F\left(1,1;1+\frac{1}{p};x^p\right)\,,\\
{\rm arctan}_p\,x&=&\int^x_0(1+t^p)^{-1}dt=x \,
F\left(1,\frac{1}{p};1+\frac{1}{p};-x^p\right)\,,\\
{\rm arsinh}_p\,x&=&\int^x_0(1+t^p)^{-1/p}dt=
xF\left(\frac{1}{p}\,,\frac{1}{p};1+\frac{1}{p};-x^p\right)\,,\\
{\rm artanh}_p\,x&=&\int^x_0(1-t^p)^{-1}dt=xF\left
(1\,,\frac{1}{p};1+\frac{1}{p};x^p\right)\,,
\end{eqnarray*}
and  by \cite[Prop 2.2]{be} ${\rm arccos}_p\,x={\rm arcsin}_p((1-x^p)^{1/p})$.
For the particular case $p=2$ one obtains the familiar elementary functions.

The paper is organized into sections as follows. Section 1, the introduction,
contains the statements of our main results. In Section 2 we give some
inequalities for the $p$-analogues of trigonometric and hyperbolic functions.
Section 3 contains the proofs of our main results and some identities. Finally
in Section 4 we give some functional inequalities for elementary functions and
Section 5 contains two small tables with a few values of the function $\sin_p$ and related functions compiled with the Mathematica$^\registered$ software.

Some of the main results are the following theorems.

\begin{theorem}\label{d} For $p>1$ and $x\in(0,1)$, we have
\begin{enumerate}
\item $\left(1+\displaystyle\frac{x^p}{p(1+p)}\right)x<
{\rm arcsin}_p\,x< \displaystyle\frac{\pi_p}{2}\,x$,
\item $\left(1+\displaystyle\frac{1-x^p}{p(1+p)}\right)(1-x^p)^{1/p}
< {\rm arccos}_p\,x< \displaystyle\frac{\pi_p}{2}\,
(1-x^p)^{1/p}$,
\item $\displaystyle\frac{(p(1+p)(1+x^p)+x^p)x}{p(1+p)(1+x^p)^{1+1/p}}<
{\rm arctan}_p\,x
< 2^{1/p}\,\,b_p\,
\left(\displaystyle\frac{x^p}{1+x^p}\right)^{1/p}$.
\end{enumerate}
\end{theorem}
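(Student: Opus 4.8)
The plan is to reduce all three parts to the single function ${\rm arcsin}_p$ via its hypergeometric representation ${\rm arcsin}_p\,x=x\,F(1/p,1/p;1+1/p;x^p)$, exploiting two structural identities. For part (1) I would write the series $F(1/p,1/p;1+1/p;x^p)=\sum_{n\ge 0}c_n\,x^{pn}$, whose coefficients $c_n=(1/p,n)^2/((1+1/p,n)\,n!)$ are all strictly positive. The term $n=0$ equals $1$ and the term $n=1$ equals exactly $x^p/(p(1+p))$, so truncating after two terms and discarding the positive tail yields the lower bound ${\rm arcsin}_p\,x>(1+x^p/(p(1+p)))\,x$. For the upper bound I would note that $F(1/p,1/p;1+1/p;x^p)$ is strictly increasing on $(0,1)$ (positive coefficients), hence dominated by its value at $x=1$; by Gauss's summation formula $F(1/p,1/p;1+1/p;1)=\Gamma(1+1/p)\Gamma(1-1/p)$, which by the reflection formula equals $\pi/(p\sin(\pi/p))=\pi_p/2$. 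Dividing by $x$ gives ${\rm arcsin}_p\,x<(\pi_p/2)\,x$. (Equivalently, the average of the increasing integrand $(1-t^p)^{-1/p}$ over $(0,x)$ is itself increasing in $x$ and tends to ${\rm arcsin}_p\,1=\pi_p/2$.)

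Part (2) is then immediate from the identity ${\rm arccos}_p\,x={\rm arcsin}_p((1-x^p)^{1/p})$ quoted in the introduction: setting $y=(1-x^p)^{1/p}$, so that $y^p=1-x^p\in(0,1)$, and substituting $y$ for $x$ in part (1) reproduces the two bounds stated in (2) verbatim.

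Part (3) is where the real work lies, and it rests on the identity
$${\rm arctan}_p\,x={\rm arcsin}_p\!\left(\frac{x}{(1+x^p)^{1/p}}\right),$$
which I would prove by the substitution $s=t(1+t^p)^{-1/p}$ in the integral $\int_0^w(1-s^p)^{-1/p}\,ds$: one computes $ds=(1+t^p)^{-1-1/p}\,dt$ and $1-s^p=(1+t^p)^{-1}$, so the integrand collapses to $(1+t^p)^{-1}\,dt$, converting the ${\rm arcsin}_p$ integral into the ${\rm arctan}_p$ integral, with $s=0,w$ corresponding to $t=0,x$. Writing $w=x(1+x^p)^{-1/p}$, so that $w^p=x^p/(1+x^p)$, the lower bound of part (1) applied to $w$ gives, after clearing denominators, exactly the rational lower bound in (3). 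For the upper bound I would use that $x\in(0,1)$ forces $w^p=x^p/(1+x^p)\in(0,1/2)$; since $F(1/p,1/p;1+1/p;\cdot)$ is increasing, $F(1/p,1/p;1+1/p;w^p)<F(1/p,1/p;1+1/p;1/2)=2^{1/p}b_p$, and multiplying by $w=(x^p/(1+x^p))^{1/p}$ yields the claimed bound. I expect the main obstacle to be discovering and verifying this $\arctan_p$--$\arcsin_p$ substitution identity; once it is in hand, parts (2) and (3) become corollaries of part (1), and the only remaining subtlety is that the sharper constant $2^{1/p}b_p$ (rather than $\pi_p/2$) in the upper bound of (3) is available precisely because the argument $w^p$ never exceeds $1/2$ on the stated range.
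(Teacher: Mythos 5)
Your proof is correct, and its overall architecture coincides with the paper's: establish (1) for ${\rm arcsin}_p$, deduce (2) from ${\rm arccos}_p\,x={\rm arcsin}_p((1-x^p)^{1/p})$, and deduce (3) by rewriting ${\rm arctan}_p\,x$ as ${\rm arcsin}_p$ evaluated at $x(1+x^p)^{-1/p}$. The differences lie in how the individual estimates are justified, and yours are generally more elementary. For the lower bound in (1) the paper combines two hypergeometric lemmas (namely $F(a,b;c;x)+F(-a,b;c;x)>2$ and $F(-a,b;c;x)<1-abx/c$), whereas you truncate the positive series after the $n=1$ term; since the $n=1$ coefficient is exactly $1/(p(1+p))$, the two routes give the identical bound. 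For the upper bound the paper's cited lemma is precisely Gauss's evaluation at $1$ combined with monotonicity, so there is no substantive difference. For (3), the paper derives the key identity via the transformation $F(a,b;c;z)=(1-z)^{-b}F(b,c-a;c;-z/(1-z))$ followed by Euler's transformation, while you obtain it by the substitution $s=t(1+t^p)^{-1/p}$ in the defining integral --- a self-contained argument that the paper only carries out (hypergeometrically) in a separate later lemma on ${\rm arctan}_p$--${\rm arcsin}_p$ identities. One point where your write-up is in fact more careful than the paper's: for the upper bound in (3) the paper appeals to the lemma bounding $F$ by its value at $1$, which would only yield the constant $\pi_p/2$; the sharper constant $2^{1/p}b_p=F(1/p,1/p;1+1/p;1/2)$ requires exactly your observation that $x^p/(1+x^p)<1/2$ for $x\in(0,1)$ together with the monotonicity of $F$ in its argument.
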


\begin{theorem}\label{thm1} For $p>1$ and $x\in(0,1)$, we have
\begin{equation}\label{thm2}
z\left (1+\frac{\log(1+x^p)}{1+p}\right)
< {\rm arsinh}_p\,x
< z\left(1+\frac{1}{p}\log(1+x^p)\right),
\,\,z=\displaystyle\left(\frac{x^p}{1+x^p}\right)^{1/p},
\end{equation}

\begin{equation}\label{thm3}
x\left(1-\frac{1}{1+p}\log(1-x^p)\right)< {\rm artanh}_p\,x
< x\left(1-\frac{1}{p}\log(1-x^p)\right)\,.
\end{equation}
\end{theorem}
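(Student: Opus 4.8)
The plan is to prove all four inequalities by the same elementary device: each candidate bound $B(x)$ and the target function agree at $x=0$, so it suffices to compare derivatives and invoke the fact that if $B(0)=g(0)$ and $B'>g'$ (resp. $B'<g'$) on $(0,1)$, then $B>g$ (resp. $B<g$) there. The derivatives of the target functions are immediate from the integral representations, namely $({\rm arsinh}_p\,x)'=(1+x^p)^{-1/p}$ and $({\rm artanh}_p\,x)'=(1-x^p)^{-1}$, while each bound is an elementary product that can be differentiated by hand.

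For (\ref{thm2}) write $z=x(1+x^p)^{-1/p}$. First I would note that $z$, the upper bound $z(1+\tfrac1p\log(1+x^p))$, and the lower bound $z(1+\tfrac{1}{1+p}\log(1+x^p))$ all vanish at $x=0$, as does ${\rm arsinh}_p$. Differentiating the upper bound and subtracting $(1+x^p)^{-1/p}$, the product-rule terms should collapse and I expect the difference to simplify to $\tfrac1p(1+x^p)^{-1-1/p}\log(1+x^p)$, which is positive on $(0,1)$; this gives the upper estimate. For the lower bound the same computation should yield a difference proportional to $\tfrac{1}{1+p}(1+x^p)^{-1-1/p}\bigl(\log(1+x^p)-x^p\bigr)$, whose sign is settled by the elementary inequality $\log(1+u)<u$ for $u>0$ with $u=x^p$, giving a negative difference and hence the lower estimate.

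For (\ref{thm3}) the same scheme applies with $g={\rm artanh}_p$ and $g'=(1-x^p)^{-1}$. The upper bound $x(1-\tfrac1p\log(1-x^p))$ again vanishes at $0$, and its derivative minus $g'$ should reduce to $-\tfrac1p\log(1-x^p)>0$ on $(0,1)$. The lower bound $x(1-\tfrac1{1+p}\log(1-x^p))$ needs one extra ingredient: the derivative difference should simplify to $-\tfrac{1}{1+p}\bigl(\log(1-x^p)+\tfrac{x^p}{1-x^p}\bigr)$, whose sign is not immediately obvious. I would dispose of it with the auxiliary claim that $h(u)=\log(1-u)+u/(1-u)$ satisfies $h(0)=0$ and $h'(u)=u/(1-u)^2>0$, so $h>0$ on $(0,1)$, making the difference negative.

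The routine differentiations are straightforward but bookkeeping-heavy, so the main obstacle is algebraic: cleanly collapsing the several product-rule terms in each derivative difference down to the single sign-definite expression above. The only genuinely nontrivial analytic input is the auxiliary monotonicity fact for $h(u)=\log(1-u)+u/(1-u)$ needed for the lower bound in (\ref{thm3}); all the remaining sign determinations rest only on the standard estimate $\log(1+u)<u$.
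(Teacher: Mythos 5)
Your proposal is correct, and every one of the claimed simplifications checks out: for \eqref{thm2}, using $z'=(1+x^p)^{-1-1/p}$ the derivative of the upper bound minus $(1+x^p)^{-1/p}$ does collapse to $\tfrac1p(1+x^p)^{-1-1/p}\log(1+x^p)>0$, and the lower-bound difference to $\tfrac{1}{1+p}(1+x^p)^{-1-1/p}\bigl(\log(1+x^p)-x^p\bigr)<0$; for \eqref{thm3} the differences reduce to $-\tfrac1p\log(1-x^p)>0$ and $-\tfrac{1}{1+p}\bigl(\log(1-x^p)+\tfrac{x^p}{1-x^p}\bigr)<0$, the last sign settled by your auxiliary function $h$. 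Since all four bounds and both target functions vanish at $x=0$, the conclusion follows. However, this is a genuinely different route from the paper's. The paper never differentiates: it writes ${\rm arsinh}_p\,x=z\,F\bigl(1,\tfrac1p;1+\tfrac1p;\tfrac{x^p}{1+x^p}\bigr)$ via the transformation \eqref{as} and ${\rm artanh}_p\,x=x\,F\bigl(1,\tfrac1p;1+\tfrac1p;x^p\bigr)$, and then invokes Lemma \ref{avb}(5), the monotonicity of $t\mapsto\bigl(F(a,b;a+b;t)-1\bigr)/\log(1/(1-t))$ from $(0,1)$ onto $\bigl(ab/(a+b),1/B(a,b)\bigr)$, with $a=1$, $b=1/p$, so that the constants $\tfrac{1}{1+p}$ and $\tfrac1p$ appear as the exact infimum and supremum of that quotient. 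What the paper's approach buys is a conceptual explanation of where those two constants come from and why they are optimal (the bounds are attained asymptotically at the two endpoints); what your approach buys is a completely elementary, self-contained proof that needs no hypergeometric machinery beyond the integral representations and the inequality $\log(1+u)<u$.
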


The next result provides several families of inequalities for elementary
functions.

\begin{theorem}\label{thm6}  For $x >0$ and $z=\pi x/2$, the function
$g(p)=f(z^p)^{1/p}$
is decreasing in $ p\in(0,\infty)$, where
 $f(z)\in\{{\rm arsinh}(z),{\rm arcosh}(z),{\rm artanh}(2 z/\pi)\}$.
\end{theorem}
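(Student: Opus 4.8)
The plan is to take logarithms and differentiate in $p$. Set $L(p)=\log g(p)=\frac{1}{p}\log f(z^{p})$, so that the monotonicity of $g$ is equivalent to $L'(p)$ keeping a constant sign on $(0,\infty)$. Differentiating, clearing the positive factor $p^{2}$, and then substituting $t=z^{p}$ (so that $p\log z=\log t$), I would reduce the whole question to the sign of the single scalar function
\begin{equation*}
\Psi(t)=\frac{t\,f'(t)}{f(t)}\,\log t-\log f(t),
\end{equation*}
since $p^{2}L'(p)=\Psi(z^{p})$. Equivalently, since $L(p)=(\log z)\,H(z^{p})$ with $H(t)=\frac{\log f(t)}{\log t}$, and since $L'(p)=(\log z)^{2}z^{p}H'(z^{p})$ with $(\log z)^{2}z^{p}>0$, the map $g$ is monotone in $p$ exactly when $H$ is monotone in $t$. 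This bookkeeping disposes of the two regimes $z>1$ and $z<1$ simultaneously, so it suffices to analyse one scalar function for each of the three choices of $f$.

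To control the sign of $\Psi$ I would pass to the logarithmic variable $X=\log t$ and write $\phi(X)=\log f(e^{X})$, which turns $\Psi$ into $\Theta(X)=X\phi'(X)-\phi(X)$. The advantage of this substitution is that $\Theta'(X)=X\phi''(X)$ and $\Theta(0)=-\phi(0)=-\log f(1)$, so the behaviour of $\Theta$ is governed by the convexity type of $\phi$ together with a single boundary value. The plan is therefore: (i) show that $\phi(X)=\log f(e^{X})$ is concave, i.e.\ that $\log f(t)$ is concave as a function of $\log t$; (ii) combine the resulting one-sided monotonicity of $\Theta$ (forced by $\Theta'=X\phi''$) with the endpoint datum $\Theta(0)=-\log f(1)$ and with the limits of $\Theta$ at $\pm\infty$, which I read off from the first-order asymptotics $f(t)\sim t$ and $f(t)\sim\frac{2}{\pi}t$ as $t\to0$, and $f(t)\sim\log(2t)$ as $t\to\infty$. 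Steps (i) and (ii) together pin down the sign of $\Theta$, hence of $\Psi$, on the relevant range.

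The main obstacle is step (i): the concavity of $\phi(X)=\log f(e^{X})$ is where the individual structure of each inverse function enters, and it must be verified separately for $\operatorname{arsinh}$, $\operatorname{arcosh}$ and $\operatorname{artanh}(2\,\cdot/\pi)$ by computing $\phi''$ and showing $\phi''\le0$. I expect the $\operatorname{arcosh}$ case to be the most delicate, because $f(1)=0$ sends $\phi(0)=-\infty$ and the square-root behaviour $\operatorname{arcosh}(t)\sim\sqrt{2(t-1)}$ near the base point $t=1$ must be tracked carefully when evaluating the boundary contribution to $\Theta$; the regime of large $t$, where $f$ grows only logarithmically, is the other place where the estimates are tight. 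Once the constant sign of $\Psi$ (equivalently the monotonicity of $H$) has been secured for each $f$, the reduction of the first paragraph delivers the asserted monotonicity of $g$ in $p$, uniformly in $x>0$ on the natural domain of each function.
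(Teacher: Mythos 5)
Your reduction to the scalar function $\Psi(t)=\frac{tf'(t)}{f(t)}\log t-\log f(t)$ is the standard device, and it is the same one the paper uses throughout Section 4; but the plan cannot be completed, and your own bookkeeping shows why. In the variable $X=\log t$ you correctly get $\Theta'(X)=X\phi''(X)$ and $\Theta(0)=-\log f(1)$; if $\phi(X)=\log f(e^X)$ is concave, as your step (i) requires, then $\Theta$ attains its \emph{maximum} at $X=0$, and for $f={\rm arsinh}$ that maximum equals $-\log\log(1+\sqrt2)\approx +0.126>0$, so $\Psi>0$ for $t$ near $1$ and $g$ is \emph{increasing} there. This is not an artifact of the method: at $z=1$ (i.e.\ $x=2/\pi$) one has $g(p)={\rm arsinh}(1)^{1/p}$ with ${\rm arsinh}(1)<1$, which is increasing in $p$; likewise ${\rm artanh}(2z^p/\pi)^{1/p}$ is increasing for every $z\in(0,1)$ (e.g.\ $z=1/2$ gives $g(1)\approx0.330<g(2)\approx0.401$), and ${\rm arcosh}(2^p)^{1/p}$ has $g(1)\approx1.317<g(2)\approx1.436$. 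So under the literal reading $f((\pi x/2)^p)$ the statement is false and no sign analysis will close it; the assertion in your last paragraph that the reduction ``delivers the asserted monotonicity'' skips exactly the boundary check that fails.

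The paper never actually writes out a proof of Theorem \ref{thm6}; the one case it does prove, Lemma \ref{cos1} with $h(k)={\rm arcosh}(\tfrac{\pi}{2}x^k)^{1/k}$, shows what is intended: only $x$ is raised to the power $p$, so $g(p)=f(\tfrac{\pi}{2}x^p)^{1/p}$, and the third case is ${\rm artanh}(x^p)^{1/p}$. The correct scalar function is then $\frac{yf'(y)}{f(y)}\log(2y/\pi)-\log f(y)$, anchored at $y=\pi/2$ rather than $y=1$, where the boundary value $-\log f(\pi/2)$ is $\le0$ because ${\rm arsinh}(\pi/2)\approx1.233>1$ and ${\rm arcosh}(\pi/2)\approx1.023>1$; the positivity of ${\rm arcosh}(\pi/2)\log{\rm arcosh}(\pi/2)\approx0.0235$ is precisely the number checked in the proof of Lemma \ref{cos1}. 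Your substitution $p\log z=\log t$ erases the factor $\pi/2$ and moves the anchor to $t=1$, which is what breaks the sign. Note also that for the ${\rm artanh}$ case the relevant property is convexity, not concavity, of $\log f(e^X)$ (compare the proof of Theorem \ref{pf1} and Lemma \ref{pf}), with the boundary datum taken as the limit $\Theta\to0$ as $X\to-\infty$ rather than a value at the anchor, where $f$ blows up. With the $\pi/2$ kept inside the logarithm, your outline coincides with the paper's treatment of the analogous statements in Lemmas \ref{b}, \ref{a0a}, \ref{aa} and \ref{cos1}: reduce to $yf'(y)\log(2y/\pi)\le f(y)\log f(y)$ and prove that scalar inequality by one more differentiation.
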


{\sc Acknowledgments.} The first author is indebted to the Graduate
School of Mathematical Analysis and its Applications for support.
The second author was, in part supported by the Academy of Finland, Project
2600066611. Both authors wish to acknowledge the expert help of
Dr. H. Ruskeep\"a\"a in the use of the Mathematica$^\registered$ software \cite{ru}
and Prof. P. H\"ast\"o for providing simplified versions of some of our proofs.

\section{\bf Preliminaries and definitions}
\medskip

For convenience, we use the notation $\mathbb{R}_+=(0,\infty)\,.$

\medskip

\begin{lemma}\label{neu}\cite[Thm 2.1]{ne2} Let $f:\mathbb{R}_+\to \mathbb{R}_+$
be a differentiable, log-convex function and let $a\geq 1$. Then $g(x)=(f(x))^a/f(a\,x)$
 decreases on its domain. In particular, if $0\leq x\leq y\,,$ then the following inequalities
 $$\frac{(f(y))^a}{f(a\,y)}\leq\frac{(f(x))^a}{f(a\,x)}\leq (f(0))^{a-1}$$
 hold true. If $0<a\leq 1$, then the function $g$ is an increasing function on $\mathbb{R}_+$
and inequalities are reversed.
\end{lemma}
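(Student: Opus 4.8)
The plan is to pass to logarithms and reduce the monotonicity of $g$ to the monotonicity of the derivative of the convex function $\log f$. Write $h=\log f$, which is convex by the log-convexity hypothesis, and set $G(x)=\log g(x)=a\,h(x)-h(ax)$. Since $g>0$ on its domain, the sign of $g'$ agrees with the sign of $G'$, so it suffices to determine the sign of $G'$.

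Differentiating gives $G'(x)=a\,h'(x)-a\,h'(ax)=a\bigl(h'(x)-h'(ax)\bigr)$. Because $h$ is convex and differentiable, $h'$ is nondecreasing. For $a\geq 1$ and $x>0$ we have $ax\geq x$, hence $h'(ax)\geq h'(x)$ and $G'(x)\leq 0$, so $g$ decreases; for $0<a\leq 1$ we instead have $ax\leq x$, so $h'(ax)\leq h'(x)$ and $G'(x)\geq 0$, so $g$ increases. This is exactly the asserted dichotomy, and it isolates the single genuine ingredient of the proof: the equivalence, for differentiable $f$, of log-convexity of $f$ with monotonicity of $(\log f)'$.

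The chain of inequalities then follows immediately from monotonicity. For $0\leq x\leq y$ and $a\geq 1$, decreasingness of $g$ gives $(f(y))^a/f(ay)=g(y)\leq g(x)=(f(x))^a/f(ax)$, while letting the argument tend to the left endpoint yields $g(x)\leq \lim_{t\to 0^+}g(t)=(f(0))^{a}/f(0)=(f(0))^{a-1}$; the case $0<a\leq 1$ reverses all three comparisons. The only point requiring a little care is this boundary term: the bound $(f(0))^{a-1}$ must be read as $\lim_{t\to 0^+}(f(t))^{a-1}$, which exists (possibly as $+\infty$) because a convex $h$ has a monotone limit at the left endpoint. Apart from checking this endpoint behavior, I expect no real obstacle — the argument is elementary once the problem is recast through $h=\log f$.
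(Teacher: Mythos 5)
Your argument is correct. Note that the paper itself gives no proof of this lemma: it is quoted verbatim from Neuman \cite[Thm 2.1]{ne2}, so there is nothing in the source to compare against. Your reduction --- passing to $h=\log f$, computing $G'(x)=a\bigl(h'(x)-h'(ax)\bigr)$, and invoking the monotonicity of $h'$ that characterizes convexity of a differentiable function --- is the standard proof of this fact and is essentially the one in Neuman's paper. Your care about reading $f(0)^{a-1}$ as the limit $\lim_{t\to 0^+}(f(t))^{a-1}$ is appropriate, since $f$ is only assumed defined on $\mathbb{R}_+=(0,\infty)$; a convex $h$ indeed has a limit in $(-\infty,+\infty]$ at the left endpoint, so the boundary term is well defined (possibly $+\infty$). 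No gaps.
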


For easy reference we recall the following identity  \cite[15.3.5]{AS}
\begin{equation}\label{as}
F(a,b;c;z)=(1-z)^{-b}F(b,c-a;c;-z/(1-z))\, .
\end{equation}

For the following lemma see \cite[Thms 1.19(10), 1.52(1), Lems, 1.33, 1.35]{avvb}.

\begin{lemma}\label{avb} \begin{enumerate}
\item For $a,b,c>0$, $c<a+b$, and $|x|<1$,
$$F(a,b;c;x)=(1-x)^{c-a-b}F(c-a,c-b;c;x)\,.$$
\item For $a,x\in(0,1)$, and  $b,c\in(0,\infty)$
$$F(-a,b;c;x)<1-\frac{a\,b}{c}\,x\,.$$
\item For $a,x\in(0,1)$, and  $b,c\in(0,\infty)$
$$F(a,b;c;x)+F(-a,b;c;x)>2\,.$$
\item Let $a,b,c\in(0,\infty)$ and $c>a+b$. Then for $x\in[0,1]$,
$$F(a,b;c;x)\leq \frac{\Gamma(c)\Gamma(c-a-b)}{\Gamma(c-a)\Gamma(c-b)}\,.$$
\item For $a,b>0$, the following function
$$f(x)=\frac{F(a,b;a+b;x)-1}{\log(1/(1-x))}$$
is strictly increasing from $(0,1)$ onto $(a\,b/(a+b),1/B(a,b))$.
\end{enumerate}
\end{lemma}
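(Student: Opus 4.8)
The plan is to treat the five parts separately, since they draw on different tools: parts (1)--(4) follow from elementary manipulation of the hypergeometric series together with the Euler integral, while part (5) is the genuinely delicate one and will rely on the monotone form of l'Hôpital's rule. For part (1) I would recognize the claimed identity as the \emph{Euler transformation}. The cleanest route is the Euler integral representation
$$F(a,b;c;x)=\frac{\Gamma(c)}{\Gamma(b)\Gamma(c-b)}\int_0^1 t^{b-1}(1-t)^{c-b-1}(1-xt)^{-a}\,dt,$$
valid for $c>b>0$; substituting $t\mapsto 1-t$ and factoring out $(1-x)^{c-a-b}$ produces the stated identity, which then extends to all admissible parameters by analytic continuation. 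Alternatively one checks that both sides solve the same hypergeometric ODE and agree at $x=0$.

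For parts (2) and (3) I would argue directly from the power series. Writing $(-a,n)=(-a)(1-a)\cdots(n-1-a)$, note that for $a\in(0,1)$ exactly one factor is negative, so $(-a,n)<0$ for every $n\ge1$, while $(a,n),(b,n),(c,n)>0$. For (2) the $n=0$ term equals $1$, the $n=1$ term equals $-\frac{ab}{c}x$, and every term with $n\ge2$ is strictly negative, yielding the strict inequality at once. For (3) I would compare coefficients termwise: the $n=0$ terms contribute exactly $2$, the $n=1$ terms cancel, and for $n\ge2$ the identity $(a,n)+(-a,n)=a\bigl(\prod_{k=1}^{n-1}(k+a)-\prod_{k=1}^{n-1}(k-a)\bigr)>0$ (since $k+a>k-a>0$) shows each remaining coefficient is positive; multiplying by the positive factor $(b,n)x^n/((c,n)n!)$ and summing gives a total strictly exceeding $2$.

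For part (4) I would use that all series coefficients are nonnegative, so $x\mapsto F(a,b;c;x)$ is nondecreasing on $[0,1]$ and hence bounded above by its value at $x=1$; the hypothesis $c>a+b$ guarantees convergence there, and \emph{Gauss's summation theorem} evaluates $F(a,b;c;1)=\Gamma(c)\Gamma(c-a-b)/(\Gamma(c-a)\Gamma(c-b))$, which is precisely the asserted bound.

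Part (5) is where I expect the real work. Writing $f=g/h$ with $g(x)=F(a,b;a+b;x)-1$ and $h(x)=-\log(1-x)$, both vanish at $x=0$, so the monotone l'Hôpital rule reduces monotonicity of $f$ to that of $g'/h'$. Using $\frac{d}{dx}F(a,b;c;x)=\frac{ab}{c}F(a+1,b+1;c+1;x)$ and $h'(x)=1/(1-x)$ gives
$$\frac{g'(x)}{h'(x)}=\frac{ab}{a+b}\,(1-x)\,F(a+1,b+1;a+b+1;x),$$
and here I would invoke part (1): the Euler transformation with exponent $(a+b+1)-(a+1)-(b+1)=-1$ collapses this to $\frac{ab}{a+b}F(a,b;a+b+1;x)$, which is manifestly increasing since its coefficients are positive; hence $f$ is strictly increasing. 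The endpoints follow from the same rule: $f(0^+)=g'(0)/h'(0)=ab/(a+b)$, while the limit at $1^-$ requires the sharp asymptotic $F(a,b;a+b;x)\sim -\frac{1}{B(a,b)}\log(1-x)$ as $x\to1^-$, giving $f(1^-)=1/B(a,b)$. Establishing this logarithmic asymptotic with the exact constant $1/B(a,b)$---the $c=a+b$ boundary case of the connection formulae near $x=1$---is the main obstacle; everything else is routine once it is in hand.
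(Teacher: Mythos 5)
First, a point of reference: the paper does not prove this lemma at all---it is quoted from the literature with the pointer \cite[Thms 1.19(10), 1.52(1), Lems 1.33, 1.35]{avvb}---so your write-up supplies arguments the paper delegates, and in substance they coincide with the standard proofs in that reference (series comparison for (2)--(3), positivity of coefficients plus Gauss summation for (4), and the monotone l'H\^opital rule for (5)). Parts (2), (3) and (4) are correct as written: the sign analysis $(-a,n)<0$ for $n\ge 1$, the cancellation at $n=1$ with $(a,n)+(-a,n)=a\bigl(\prod_{k=1}^{n-1}(k+a)-\prod_{k=1}^{n-1}(k-a)\bigr)>0$ for $n\ge 2$, and the monotonicity-plus-Gauss argument are all sound. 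In part (1), however, your first route has a slip: the substitution $t\mapsto 1-t$ in the Euler integral factors out $(1-x)^{-a}$, not $(1-x)^{c-a-b}$, and yields the \emph{Pfaff} transformation $F(a,b;c;x)=(1-x)^{-a}F(a,c-b;c;x/(x-1))$; to reach the Euler transformation in one step you need $t\mapsto(1-t)/(1-xt)$, or else apply Pfaff twice. Your fallback---both sides are analytic at $0$ with value $1$ and satisfy the same hypergeometric ODE, whose space of solutions analytic at $0$ is one-dimensional for $c\notin\{0,-1,-2,\dots\}$---is valid and closes this.

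The one step you explicitly leave open, the endpoint limit in (5), is in fact no obstacle, and you already have everything needed to fill it. You do not need the sharp connection-formula asymptotic $F(a,b;a+b;x)\sim -\log(1-x)/B(a,b)$. Since $h(x)=\log(1/(1-x))\to\infty$ as $x\to 1^-$ and your own computation (Euler transformation with exponent $-1$) gives $g'(x)/h'(x)=\frac{ab}{a+b}\,F(a,b;a+b+1;x)$, the $\infty/\infty$ form of l'H\^opital's rule yields
$$\lim_{x\to 1^-}f(x)=\frac{ab}{a+b}\,F(a,b;a+b+1;1)=\frac{ab}{a+b}\cdot\frac{\Gamma(a+b+1)\,\Gamma(1)}{\Gamma(a+1)\,\Gamma(b+1)}=\frac{\Gamma(a+b)}{\Gamma(a)\,\Gamma(b)}=\frac{1}{B(a,b)}\,,$$
where the evaluation at $x=1$ is Gauss's theorem, admissible because $(a+b+1)-a-b=1>0$ (the same ingredient you used in (4); continuity of the series up to $x=1$ follows from absolute convergence there, or Abel's theorem). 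With that, the limit $f(0^+)=ab/(a+b)$ from $g'(0)/h'(0)$, strict monotonicity from the strict form of the monotone l'H\^opital rule, and continuity give the full range statement, and your proof of (5) becomes self-contained.
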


\begin{lemma}\label{pf} For $p>1$ and $x\in(0,1)$, the functions
$$({\rm arcsin}_p(x^k))^{1/k}\quad {\rm and}\quad ({\rm artanh}_p(x^k))^{1/k}$$
are decreasing in $k\in(0,\infty)$, also
$$({\rm arctan}_p(x^k))^{1/k}\quad {\rm and}\quad ({\rm arsinh}_p(x^k))^{1/k}$$
are increasing in $k\in(0,\infty)$.

In particular, for $k\geq 1$
$$\sqrt[k]{{\rm arcsin}_p(x^k)}\leq {\rm arcsin}_p(x)
\leq ({\rm arcsin}_p\sqrt[k]{x})^k\,,$$
$$\sqrt[k]{{\rm artanh}_p(x^k)}\leq {\rm artanh}_p(x)
\leq ({\rm artanh}_p\sqrt[k]{x})^k\,.$$
$$({\rm arsinh}_p\sqrt[k]{x})^k\leq {\rm arsinh}_p(x)
\leq \sqrt[k]{{\rm arsinh}_p(x^k)}\,,$$
$$({\rm arctan}_p\sqrt[k]{x})^k\leq {\rm arctan}_p(x)
\leq \sqrt[k]{{\rm arctan}_p(x^k)}\,.$$

\end{lemma}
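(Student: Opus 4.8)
The plan is to express each inverse function through its hypergeometric representation and then study monotonicity in the parameter $k$ by reducing the claim to a statement about the function $k \mapsto (\arcsin_p(x^k))^{1/k}$ (and its three companions) written out explicitly. Using the series from the introduction, I would write, for instance,
\[
({\rm arcsin}_p(x^k))^{1/k}
= \Big( x^k\, F\big(\tfrac1p,\tfrac1p;1+\tfrac1p;x^{kp}\big)\Big)^{1/k}
= x\, F\big(\tfrac1p,\tfrac1p;1+\tfrac1p;x^{kp}\big)^{1/k},
\]
and similarly for the other three, so that in each case the $k$-dependence is concentrated in a factor of the form $h(x^{kp})^{1/k}$, where $h(t)=F(a,b;1+\tfrac1p;\pm t)$ is a power series $\sum_{n\ge0} c_n t^n$ with $c_0=1$. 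The whole theorem then follows from a single lemma about such functions.

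The key step is to show that if $h(t)=1+\sum_{n\ge 1} c_n t^n$ is analytic near $0$, then $k\mapsto h(x^{kp})^{1/k}$ is monotone, with the direction of monotonicity governed by the signs of the coefficients $c_n$. Setting $s=x^p\in(0,1)$ and taking logarithms, I would write $\varphi(k)=\tfrac1k \log h(s^k)$ and differentiate: the sign of $\varphi'(k)$ is the sign of
\[
k\, s^k \frac{h'(s^k)}{h(s^k)}\log s - \log h(s^k).
\]
When all $c_n>0$ (the ${\rm arcsin}_p$ and ${\rm artanh}_p$ cases, since the hypergeometric coefficients are positive and the argument is $+x^{kp}$), one expects the decreasing direction; when the series is alternating (the ${\rm arctan}_p$ and ${\rm arsinh}_p$ cases, where the argument is $-x^{kp}$) one expects the increasing direction. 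I would handle both simultaneously by comparing $\log h(s^k)$ with $s^k h'(s^k)/h(s^k)$ multiplied by the appropriate logarithmic weight; the cleanest route is to prove that $t\mapsto t h'(t)/h(t)$ together with the concavity/convexity of $\log h$ forces the required sign, using that $\log s<0$.

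The main obstacle will be pinning down the monotonicity of $\varphi'(k)$ rigorously from coefficient signs alone, since the naive termwise estimate mixes terms of different orders. I expect the tidy way around this is to reformulate in terms of $t=s^k$ and show that $G(t)=\log h(t)$ satisfies $t G'(t)\log t \lessgtr G(t)\cdot(\text{something})$; equivalently, one proves that $t\mapsto \log h(t)/\log(1/(1-t))$-type quotients are monotone and then invokes the positivity (resp.\ alternation) of the $c_n$. Lemma~\ref{avb}(5), which gives the monotonicity of $(F(a,b;a+b;x)-1)/\log(1/(1-x))$, is exactly the tool suited to the ${\rm artanh}_p$ case, and an analogous sign analysis covers the others. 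Once the single-variable lemma is in place, the four monotonicity statements drop out immediately by substituting the correct $h$, and the displayed inequalities for $k\ge 1$ follow by comparing the values at $k$ and at $k=1$.
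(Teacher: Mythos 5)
Your reduction is sound as far as it goes: factoring out $x$ and writing the $k$-dependence as $h(x^{kp})^{1/k}$ with $h(t)=F(a,b;1+\tfrac1p;\pm t)$ is legitimate, and your formula for the sign of $\varphi'(k)$ is correct. But the proposal stops exactly at the decisive step, and the way you propose to close it is off target. The ``main obstacle'' you describe does not exist: substituting $t=s^k$ turns your expression into
\[
t\,\frac{h'(t)}{h(t)}\,\log t \;-\; \log h(t),
\]
and both summands have a fixed sign once you know that $h\ge 1$ and $h'\ge 0$ (the $\arcsin_p$, $\operatorname{artanh}_p$ cases, where $\log t<0$ makes the first term $\le 0$ and $\log h\ge0$ makes the second $\le 0$), respectively $0<h\le 1$ and $h'\le 0$ (the $\arctan_p$, $\operatorname{arsinh}_p$ cases, where both terms are $\ge 0$). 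No termwise estimate and no comparison of ``terms of different orders'' is needed, and Lemma~\ref{avb}(5) is a red herring here --- it bounds $F(a,b;a+b;x)-1$ by logarithms but gives you nothing about the sign of $t(\log t)h'/h-\log h$. Moreover, for the alternating cases the needed facts $0<h\le1$, $h'\le0$ do not follow from coefficient signs alone; you have to justify them, and the natural justification is $h(t)=\frac{1}{y}\int_0^y g(u)\,du$ with $y=t^{1/p}$ and $g$ the integrand $(1\pm u^p)^{-1/p}$ or $(1\pm u^p)^{-1}$: the mean of a function that is $\ge1$ and increasing is $\ge1$, and $g(y)$ exceeds its mean, giving $h\ge1$, $h'\ge0$; dually for $g\le1$ decreasing.

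That last observation is precisely the paper's proof, carried out directly on $E(x)=\int_0^x g(t)\,dt$ without ever mentioning hypergeometric functions: the authors compute $\frac{d}{dk}E(x^k)^{1/k}$ and show it is $\le0$ whenever $g\ge1$ and $g$ is increasing, and $\ge0$ whenever $g\le1$ and $g$ is decreasing. Your $\log h(t)=\log\bigl(E(x^k)/x^k\bigr)$ and $t\,h'(t)/h(t)$ are, after the change of variables, exactly the paper's quantities $\log(E/x^k)$ and $\bigl(x^kE'/E-1\bigr)/p$. So your route is the same argument in less convenient coordinates; to make it complete you should drop the appeal to Lemma~\ref{avb}(5), state and verify the four sign conditions on $h$ via the integral representation, and note that the second inequality in each displayed pair comes from evaluating the monotone function at $1/k\le1$ against $k=1$ (equivalently, replacing $x$ by $x^{1/k}$), not merely from comparing $k$ with $1$.
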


\begin{proof} Let
 let
\[
f(k) := \big( E(x^k)\big)^{1/k}, E(x) := \int_0^x g(t)\, dt, E=E(x^k).
\]
We get
\[
f' = -E^{1/k} \log E \frac1{k^2} + \frac1k E^{1/k -1} E' x^k \log x
= \frac{E^{1/k}}{k^2} \Big( -\log \frac{E}{x^k}  - \big(x^k  \frac{E'}{E} - 1 \big) \log \frac1{x^{k}}\big).
\]
If $g\ge 1$, then
\[
\frac{E}{x^k} = \frac{1}{x^k}\int_0^{x^k} g(t)\, dt \ge 1.
\]
If $g$ is increasing, then
\[
E' - \frac{E}{x^k} = g(x^k) - \frac{1}{x^k}\int_0^{x^k} g(t)\, dt \ge 0,
\]
so that $x^k  \frac{E'}{E} - 1 \ge 0$. Thus $f'\le 0$ under these assumptions.

For arcsin$_p$ and artanh$_p$, $g$ is $(1-t^p)^{-1/p}$ and $(1-t^p)^{-1}$, so the
conditions are clearly satisfied. Additionally, we see that for arsinh$_p$ and arctan$_p$
the conditions $g\le 1$ and $g$ is decreasing and this conclude that $f'\ge 0$. This completes the proof.

\end{proof}

%

\begin{theorem}\label{pf1} For $p>1$ and $r,s\in(0,1)$, the following inequalities hold
\\
\begin{enumerate}
\item ${\rm arcsin}_p(r\,s)\leq \sqrt{{\rm arcsin}_p(r^2)\,{\rm arcsin}_p(s^2)}\leq
{\rm arcsin}_p(r)\,{\rm arcsin}_p(s),\,\, r,s\in(0,1)\,,$\\

\item ${\rm artanh}_p(r\,s)\leq \sqrt{{\rm artanh}_p(r^2)\,{\rm artanh}_p(s^2)}\leq
{\rm artanh}_p(r)\,{\rm artanh}_p(s),\,\, r,s\in(0,1)\,,$

\item ${\rm arsinh}_p(r^2)\,{\rm arsinh}_p(s^2)\leq\sqrt{{\rm arsinh}_p(r^2)\,{\rm arsinh}_p(s^2)}\leq
{\rm arsinh}_p(r\,s)\,,$

\item ${\rm artanh}_p(r)\,{\rm artanh}_p(s)\leq \sqrt{{\rm artanh}_p(r^2)\,{\rm artanh}_p(s^2)}\leq{\rm artanh}_p(r\,s)\,.$

\end{enumerate}
\end{theorem}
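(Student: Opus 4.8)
The plan is to break each two-sided chain into an \emph{outer} inequality (the geometric mean against the product of single values) and an \emph{inner} inequality (the value at the product against the geometric mean), and to treat these by two separate devices. The outer parts come straight from Lemma~\ref{pf}, while the inner parts are exactly statements of geometric (multiplicative) convexity or concavity. Throughout let $f$ denote the relevant inverse function, and set $u=r^2$, $v=s^2$, so that $rs=\sqrt{uv}$; note that the four functions split according to the two cases of Lemma~\ref{pf}, with ${\rm arcsin}_p,{\rm artanh}_p$ in one group and ${\rm arsinh}_p,{\rm arctan}_p$ in the other (so that the last chain is read with ${\rm arctan}_p$).

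For the outer inequalities I would invoke Lemma~\ref{pf} with the exponents $k=1$ and $k=2$. When $k\mapsto (f(x^k))^{1/k}$ is decreasing (the cases $f={\rm arcsin}_p,{\rm artanh}_p$) we get $\sqrt{f(x^2)}\le f(x)$; evaluating at $x=r$ and $x=s$ and multiplying yields $\sqrt{f(r^2)f(s^2)}\le f(r)f(s)$, the right-hand member of the first two chains. When the same map is increasing (the cases $f={\rm arsinh}_p,{\rm arctan}_p$) we get $f(x)\le\sqrt{f(x^2)}$, and multiplying gives $f(r)f(s)\le\sqrt{f(r^2)f(s^2)}$, the left-hand member of the last two chains.

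For the inner inequalities observe that, with $u=r^2$ and $v=s^2$, the left member of the first chain is $f(\sqrt{uv})\le\sqrt{f(u)f(v)}$ and the right member of the third chain is $\sqrt{f(u)f(v)}\le f(\sqrt{uv})$. Writing $\phi(t):=\log f(e^t)$ and taking $a=\log u$, $b=\log v$, these are precisely the midpoint convexity, respectively concavity, of $\phi$. Since $\phi'(t)=g(e^t)$ with
$$g(x):=\frac{x\,f'(x)}{f(x)}>0,$$
it suffices to show that $g$ is increasing for $f={\rm arcsin}_p,{\rm artanh}_p$ and decreasing for $f={\rm arsinh}_p,{\rm arctan}_p$. (For the two convex cases one could instead note that ${\rm arcsin}_p$ and ${\rm artanh}_p$ have nonnegative Maclaurin coefficients, so that $f(e^t)$ is a sum of the log-convex terms $x^{pn+1}$ and hence log-convex; the criterion below, however, handles all four functions uniformly.)

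The monotonicity of $g$ is the heart of the matter, and I would reduce it to a one-line inequality. Writing $f(x)=\int_0^x w(t)\,dt$ with $w=f'>0$ and substituting $t=sx$ gives
$$\frac{1}{g(x)}=\frac{1}{x\,w(x)}\int_0^x w(t)\,dt=\int_0^1\frac{w(sx)}{w(x)}\,ds,$$
so everything rests on the monotonicity in $x$ of $w(sx)/w(x)$ for each fixed $s\in(0,1)$. With $\rho:=(\log w)'$ we have $\partial_x\log\big(w(sx)/w(x)\big)=s\,\rho(sx)-\rho(x)$, and for the four integrands $w(t)=(1-t^p)^{-1/p},(1+t^p)^{-1/p},(1-t^p)^{-1},(1+t^p)^{-1}$ one finds $\rho(t)=\pm c\,t^{p-1}/(1\mp t^p)$ with $c\in\{1,p\}$. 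A short simplification then shows that $s\rho(sx)-\rho(x)$ is $\pm x^{p-1}$ times a factor whose sign is controlled entirely by the elementary inequality $s^p\le1$; hence $w(sx)/w(x)$ is monotone in the required direction, $1/g$ is correspondingly monotone, and the desired convexity or concavity of $\phi$ follows. The main obstacle is exactly this sign determination, and once it collapses to $s^p\le1$ the remaining steps are routine.
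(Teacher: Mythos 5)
Your proposal is correct and follows essentially the same route as the paper: the outer inequalities come from Lemma~\ref{pf} with $k=2$, and the inner ones from the convexity (for ${\rm arcsin}_p$, ${\rm artanh}_p$) or concavity (for ${\rm arsinh}_p$, ${\rm arctan}_p$) of $\log f(e^x)$, which is exactly the decomposition in the paper's proof of Theorem~\ref{pf1}. Your verification of that convexity --- via the representation $f(x)/(x f'(x))=\int_0^1 w(sx)/w(x)\,ds$ and the sign of $s\rho(sx)-\rho(x)$, which reduces to $s^p\le 1$ --- differs in detail from the paper's sufficient condition comparing $f'(0)(f'+yf'')$ with $(f')^2$, but both are sound, and you are right that items (3) and (4) of the statement must be read with ${\rm arsinh}_p(r)\,{\rm arsinh}_p(s)$ on the left and with ${\rm arctan}_p$, respectively.
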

\begin{proof}
Let $h(x) := \log f(e^x)$. Then $h$ is convex (in the $C^2$ case)
when $h''\ge 0$, i.e.\ iff
\[
\frac f y (f' + y f'') \ge (f')^2,
\]
where $y=e^x$ and the function is evaluated at $y$. If $f''\ge 0$, then
$$\frac f y \ge f'(0)$$
, so a sufficient condition for convexity is
$f'(0) (f' + y f'') \ge (f')^2$. If $f''\le 0$, the reverse holds, so a
sufficient condition for concavity is $f'(0) (f' + y f'') \le (f')^2$.
Suppose
\[
f(x) := \int_0^x g(t)\, dt.
\]
Then $f' = g$ and $f'' = g'$. Then one easily checks that
$h$ is convex in case $g$ is $(1-t^p)^{-1/p}$ and $(1-t^p)^{-1}$,
and concave for $g$ equal to $(1+t^p)^{-1/p}$ and $(1+t^p)^{-1}$.¨
Now proof follows easily from Lemma \ref{pf}.
\end{proof}

\begin{lemma} For $k,p>1$ and $r \geq s$, we have
\begin{eqnarray*}
\left(\frac{{\rm arcsin}_p(s)}{{\rm arcsin}_p(r)}\right)^{k}&\leq&
\frac{{\rm arcsin}_p(s^k)}{{\rm arcsin}_p(r^k)}\,,\,\,\, r,s\in(0,1),\\
\left(\frac{{\rm artanh}_p(s)}{{\rm artanh}_p(r)}\right)^{k}&\leq&
\frac{{\rm artanh}_p(s^k)}{{\rm artanh}_p(r^k)}\,,\,\,\, r,s\in(0,1),\\
\frac{{\rm arsinh}_p(s^k)}{{\rm arsinh}_p(r^k)}&\leq&
\left(\frac{{\rm arsinh}_p(s)}{{\rm arsinh}_p(r)}\right)^{k}
\,,\,\,\, r,s\in(0,1)\,.
\end{eqnarray*}
\end{lemma}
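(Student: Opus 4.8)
The plan is to take logarithms and recognize each of the three inequalities as a comparison of secant slopes of the function $h(t) := \log f(e^t)$, where $f$ denotes the relevant inverse function, and then to invoke the convexity/concavity of $h$ that was already established in the proof of Theorem \ref{pf1}.

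First I would substitute $s = e^a$ and $r = e^b$. Since $r \ge s$ with $r,s \in (0,1)$, this gives $a \le b < 0$ (the case $a=b$ forces $r=s$ and both sides equal $1$, so assume $a<b$). Take $f = {\rm arcsin}_p$ and $h(t)=\log f(e^t)$. Then $f(s^k)=f(e^{ka})$, etc., and taking logarithms in the first inequality turns it into $k[h(a)-h(b)] \le h(ka)-h(kb)$, i.e., after dividing by $k(b-a)>0$,
\[
\frac{h(kb)-h(ka)}{kb-ka} \le \frac{h(b)-h(a)}{b-a}.
\]
The ${\rm artanh}_p$ inequality reduces to the identical form, and the ${\rm arsinh}_p$ inequality reduces to the reversed form.

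The key tool is the standard monotonicity of secant slopes: for a convex $h$ the slope $\tfrac{h(v)-h(u)}{v-u}$ is nondecreasing in each of $u$ and $v$ separately, and nonincreasing when $h$ is concave. Because $k>1$ and $a,b<0$, we have $ka \le a$ and $kb \le b$, so the interval $[ka,kb]$ lies weakly to the left of $[a,b]$ endpoint by endpoint. Passing through the intermediate slope over $[ka,b]$, the secant slope over $[ka,kb]$ therefore does not exceed the one over $[a,b]$ when $h$ is convex, which is exactly the displayed inequality; when $h$ is concave the comparison reverses, yielding the ${\rm arsinh}_p$ statement.

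It then remains only to supply the convexity of $h$, and this is the sole piece of analytic content. It is precisely the computation already carried out in the proof of Theorem \ref{pf1}: with $g(t)=(1-t^p)^{-1/p}$ (the ${\rm arcsin}_p$ case) and $g(t)=(1-t^p)^{-1}$ (the ${\rm artanh}_p$ case) the function $h$ is convex, while with $g(t)=(1+t^p)^{-1/p}$ (the ${\rm arsinh}_p$ case) it is concave. I would cite that verification rather than repeat it, so that the present lemma is essentially a corollary of Theorem \ref{pf1}. Given this, the genuine difficulty has already been overcome, and the only thing demanding care is keeping the direction of the slope comparison consistent with the sign conventions $a \le b < 0$ and $k>1$; it is exactly this bookkeeping of inequality directions, rather than any new estimate, that I expect to be the main obstacle.
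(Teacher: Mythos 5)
Your argument is correct, and it rests on exactly the same analytic content as the paper's proof, namely the convexity of $t \mapsto \log {\rm arcsin}_p(e^t)$ and $t \mapsto \log {\rm artanh}_p(e^t)$ and the concavity of $t \mapsto \log {\rm arsinh}_p(e^t)$, all imported from the proof of Theorem \ref{pf1}. The only difference is the packaging of the last step: the paper substitutes $r=e^{-x}$, $s=e^{-y}$, notes that $u(x)={\rm arcsin}_p(e^{-x})$, $v(x)={\rm artanh}_p(e^{-x})$ and $1/{\rm arsinh}_p(e^{-x})$ are log-convex, and then quotes Lemma \ref{neu} (Neuman's monotonicity of $(f(x))^a/f(a x)$ for log-convex $f$ and $a\ge 1$), whereas you derive the same conclusion by hand from the monotonicity of secant slopes of a convex function over the endpoint-by-endpoint shifted intervals $[ka,kb]$ and $[a,b]$. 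These are equivalent --- Lemma \ref{neu} is essentially your chord-slope computation stated abstractly --- so your version is marginally more self-contained, while the paper's is shorter because the direction bookkeeping ($a\le b<0$, $k>1$, hence $ka\le a$ and $kb\le b$) is absorbed into the cited lemma. Your sign chasing is correct, the chain through the intermediate slope over $[ka,b]$ is legitimate since $ka\le a<b$ and $kb\le b$, and the degenerate case $r=s$ is handled.
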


\begin{proof} For $x>0$, the following functions
$$u(x)={\rm arcsin}_p(e^{-x})\,,\quad v(x)={\rm artanh}_p(e^{-x})\,,$$
$$w_1(x)=1/{\rm arsinh}_p(e^{-x})$$  
are log-convex by the proof of Theorem \ref{pf1}. Let $x<y$, $e^{-x}=r\geq s=e^{-y}$, now
inequalities follow from Lemma \ref{neu}.
\end{proof}

\begin{lemma}\label{ku}\cite[Thm 2, p.151]{ku}
Let $J\subset\mathbb{R}$ be an open interval, and let $f:J\to \mathbb{R}$
be strictly monotonic function. Let $f^{-1}:f(J)\to J$ be the inverse to $f$ then
\begin{enumerate}
\item if $f$ is convex and increasing, then $f^{-1}$ is concave,
\item if $f$ is convex and decreasing, then $f^{-1}$ is convex,
\item if $f$ is concave and increasing, then $f^{-1}$ is convex,
\item if $f$ is concave and decreasing, then $f^{-1}$ is concave.
\end{enumerate}
\end{lemma}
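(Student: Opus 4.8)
The plan is to derive all four statements directly from the definitions of convexity and concavity, using only the monotonicity of $f$ (and hence of $f^{-1}$); no differentiability is required, which matches the generality of the hypotheses. I would establish case (1) in detail and then obtain the remaining three by the same argument with the inequality signs adjusted.

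For case (1), suppose $f$ is convex and increasing. I would fix $y_1,y_2\in f(J)$ and $\lambda\in[0,1]$ and set $x_i=f^{-1}(y_i)$, so that $y_i=f(x_i)$. Convexity of $f$ gives
\[
f\big(\lambda x_1+(1-\lambda)x_2\big)\le \lambda f(x_1)+(1-\lambda)f(x_2)=\lambda y_1+(1-\lambda)y_2.
\]
Since $f$ is increasing, $f^{-1}$ is increasing as well, so applying $f^{-1}$ to both sides preserves the inequality and yields
\[
\lambda f^{-1}(y_1)+(1-\lambda)f^{-1}(y_2)=\lambda x_1+(1-\lambda)x_2\le f^{-1}\big(\lambda y_1+(1-\lambda)y_2\big),
\]
which is exactly the concavity of $f^{-1}$.

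For the other three cases I would repeat this computation with two independent sign flips. If $f$ is concave the first displayed inequality reverses, and if $f$ is decreasing then $f^{-1}$ is decreasing, so the final application of $f^{-1}$ reverses the inequality. Tracking these changes gives convex $+$ decreasing $\Rightarrow$ convex for (2), concave $+$ increasing $\Rightarrow$ convex for (3), and concave $+$ decreasing $\Rightarrow$ concave for (4). As a smooth-case sanity check, if $f\in C^2$ and $g=f^{-1}$ then differentiating $f(g(y))=y$ twice yields $g''=-f''/(f')^3$, whose sign reproduces each of the four conclusions once the signs of $f'$ and $f''$ are fixed.

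I do not expect a genuine obstacle here; the only thing to watch is the bookkeeping of inequality directions, since each conclusion depends on the two independent sign choices, namely the monotonicity and the curvature of $f$. Since the statement is quoted from \cite[Thm 2, p.151]{ku}, one could also simply cite the reference, but the short self-contained argument above is worth recording.
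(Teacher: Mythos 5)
Your argument is correct, and all four sign-bookkeeping cases check out (as does your $g''=-f''/(f')^3$ sanity check). Note, however, that the paper itself gives no proof of this lemma at all: it is stated purely as a quoted result from Kuczma \cite[Thm 2, p.151]{ku} and used as a black box elsewhere. So your self-contained derivation from the definitions is strictly more than the paper provides, and it is the natural elementary proof. The only point worth making explicit is why $\lambda y_1+(1-\lambda)y_2$ lies in the domain $f(J)$ of $f^{-1}$: since $f$ is convex (or concave) on the open interval $J$ it is continuous there, so $f(J)$ is an interval and is closed under convex combinations; without that remark the application of $f^{-1}$ to the middle term is not justified. With that one sentence added, your proof is complete.
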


\begin{lemma} For $k,p>1$ and $r\geq s$, we have
\begin{eqnarray*}
\left(\frac{\sin_p(r)}{\sin_p(s)}\right)^k&\leq& \frac{\sin_p(r^k)}{\sin_p(s^k)}
,\,\, r,s\in(0,1),\\
\left(\frac{\tanh_p(r)}{\tanh_p(s)}\right)^k&\leq& \frac{\tanh_p(r^k)}{\tanh_p(s^k)}
,\,\, r,s\in(0,\infty),\\
\left(\frac{\sinh_p(r)}{\sinh_p(s)}\right)^k&\geq& \frac{\sinh_p(r^k)}{\sinh_p(s^k)}
,\,\, r,s\in(0,1).
\end{eqnarray*}
Inequalities reverse for $k\in(0,1)$.
\end{lemma}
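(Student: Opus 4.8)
The plan is to reduce all three inequalities to the log-convexity machinery already in place for the inverse functions, transferring it from $ {\rm arcsin}_p,{\rm artanh}_p,{\rm arsinh}_p$ to their inverses $\sin_p,\tanh_p,\sinh_p$ by means of Lemma \ref{ku} and then finishing with Lemma \ref{neu}, exactly as in the preceding lemma. The starting point is the fact, established in the proof of Theorem \ref{pf1}, that $H_1(x)=\log{\rm arcsin}_p(e^x)$ and $H_2(x)=\log{\rm artanh}_p(e^x)$ are convex while $H_3(x)=\log{\rm arsinh}_p(e^x)$ is concave; moreover each $H_i$ is strictly increasing, being a composition of increasing maps.

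The key step is to recognise the functional inverses of the $H_i$. Solving $y=H_1(x)=\log{\rm arcsin}_p(e^x)$ gives $e^x=\sin_p(e^y)$, whence $H_1^{-1}(y)=\log\sin_p(e^y)$, and in the same way $H_2^{-1}(y)=\log\tanh_p(e^y)$ and $H_3^{-1}(y)=\log\sinh_p(e^y)$. Now Lemma \ref{ku} delivers the convexity type of these inverses: since $H_1,H_2$ are convex and increasing, part (1) shows $H_1^{-1},H_2^{-1}$ are concave, i.e.\ $\sin_p(e^y)$ and $\tanh_p(e^y)$ are log-concave, while since $H_3$ is concave and increasing, part (3) shows $H_3^{-1}$ is convex, i.e.\ $\sinh_p(e^y)$ is log-convex. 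Because log-convexity and log-concavity are preserved under the reflection $y\mapsto -y$, the functions $\phi_1(x)=\sin_p(e^{-x})$ and $\phi_2(x)=\tanh_p(e^{-x})$ are log-concave, and $\phi_3(x)=\sinh_p(e^{-x})$ is log-convex, on $\mathbb{R}_+$.

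With these facts the conclusion is immediate from Lemma \ref{neu}. Writing $r=e^{-x}\ge s=e^{-y}$, equivalently $0<x\le y$, and taking $a=k\ge 1$, Lemma \ref{neu} applied to the log-convex $\phi_3$ shows $\phi_3(x)^k/\phi_3(kx)$ is decreasing, which upon the substitution $r=e^{-x}$ is precisely the stated $\sinh_p$ inequality. For the log-concave $\phi_1,\phi_2$ I would apply Lemma \ref{neu} to the reciprocals $1/\phi_i$, which are log-convex; the monotonicity then reverses, so $\phi_i(x)^k/\phi_i(kx)$ is increasing, and this rearranges to the $\sin_p$ and $\tanh_p$ inequalities. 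The range $k\in(0,1)$ is covered by the $a\le 1$ clause of Lemma \ref{neu}, which reverses every monotonicity and hence every inequality. Note that this route produces the inequalities with $r,s$ ranging over $(0,1)$, corresponding to $x,y>0$ where Lemma \ref{neu} lives on $\mathbb{R}_+$.

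The points needing care are routine rather than deep: one must verify the inverse-function identities $H_i^{-1}(y)=\log(\,\cdot\,)_p(e^y)$ and the monotonicity of the $H_i$ so that the correct branch of Lemma \ref{ku} applies, and one must keep track of domains so that $e^{-x}$ lies where each $p$-function is defined; for $\sin_p$ this is automatic since $a_p=\pi_p/2>1$, and for $\tanh_p,\sinh_p$ one restricts to the relevant interval. The main obstacle, such as it is, is purely bookkeeping: correctly matching the three convexity types and the two monotonicity directions through the inverse, and importing the log-concave version of Lemma \ref{neu} via the elementary equivalence that $f$ is log-concave iff $1/f$ is log-convex. The one genuinely substantive input, the convexity of the $H_i$, is not reproved here but taken from the proof of Theorem \ref{pf1}, and it is there that the structure of the integrands $(1\mp t^p)^{\mp 1/p}$ is used.
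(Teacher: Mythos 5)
Your argument is correct and is essentially the paper's own proof: both take the convexity/concavity of $\log f(e^{\pm x})$ for $f\in\{{\rm arcsin}_p,{\rm artanh}_p,{\rm arsinh}_p\}$ from the proof of Theorem \ref{pf1}, transfer it to $\sin_p$, $\tanh_p$, $\sinh_p$ via Lemma \ref{ku}, and conclude with Lemma \ref{neu}. The only differences are cosmetic (you use the increasing parametrization $e^{x}$ and Lemma \ref{ku}(1) where the paper uses $e^{-x}$ and Lemma \ref{ku}(2), then pass to reciprocals to get log-convexity), so the two proofs coincide in substance.
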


\begin{proof} It is clear from the proof of Theorem \ref{pf1} that the functions
$$f(x)=\log({\rm arcsin_p}(e^{-x})),\,g(x)=\log({\rm artanh_p}(e^{-x}))
,\, h(x)=\log(1/{\rm arsinh_p}(e^{x}))$$
are convex and decreasing, then Lemma \ref{ku}(2) implies that
$$f^{-1}(y)=\log(1/\sin_p(e^y)),\,g^{-1}(y)=\log(1/\tanh_p(e^y))
,\,h^{-1}(y)=\log(\sinh_p(e^{-y})),\,$$
are convex, now the result follows from Lemma \ref{neu}.
\end{proof}

\begin{lemma}\label{nr} For $p>1$, the following inequalities hold
\begin{enumerate}
\item $\sqrt{\sin_p(r^2)\sin_p(s^2)}\leq \sin_p(r\,s)\,,\, r,s\in(0,\pi_p/2)\,,$\\

\item $\sqrt{\tanh_p(r^2)\tanh_p(s^2)}\leq \tanh_p(r\,s)\,,\, r,s\in(0,\infty)\,,$\\

\item $\sinh_p(r\,s)\leq\sqrt{\sinh_p(r^2)\sinh_p(s^2)}
\,,\, r,s\in(0,\infty)\,.$
\end{enumerate}
\end{lemma}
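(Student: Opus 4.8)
The plan is to recast each of the three inequalities as a statement about geometric (log-in-log) convexity, and then to read off the required convexity directly from the facts already established in the preceding lemma, so that essentially no new computation is needed. The key observation is elementary: for a positive function $F$, the inequality $\sqrt{F(r^2)F(s^2)}\le F(rs)$ holds for all admissible $r,s$ if and only if the function $x\mapsto \log F(e^x)$ is concave. Indeed, writing $r^2=e^u$ and $s^2=e^v$ we have $rs=e^{(u+v)/2}$, and after taking logarithms the inequality becomes $\frac12\big(\log F(e^u)+\log F(e^v)\big)\le \log F(e^{(u+v)/2})$, which is precisely midpoint concavity (hence, by continuity, concavity) of $\phi(x)=\log F(e^x)$. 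The reversed inequality corresponds in the same way to convexity of $\phi$.

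From the proof of the preceding lemma we already know that the three functions $y\mapsto \log(1/\sin_p(e^y))$, $y\mapsto \log(1/\tanh_p(e^y))$ and $y\mapsto \log(\sinh_p(e^{-y}))$ are convex. For part (1) I set $\phi_1(x)=\log\sin_p(e^x)=-\log(1/\sin_p(e^x))$; convexity of $\log(1/\sin_p(e^x))$ gives concavity of $\phi_1$, and the equivalence above yields $\sqrt{\sin_p(r^2)\sin_p(s^2)}\le\sin_p(rs)$. Part (2) is identical with $\sin_p$ replaced by $\tanh_p$. For part (3) I put $\phi_3(x)=\log\sinh_p(e^x)$; since convexity is preserved under the reflection $x\mapsto -x$, convexity of $\log(\sinh_p(e^{-y}))$ forces $\phi_3$ to be convex, and the equivalence then gives the reversed inequality $\sinh_p(rs)\le\sqrt{\sinh_p(r^2)\sinh_p(s^2)}$, which is exactly (3).

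The step that requires the most care is the bookkeeping of domains, so that the substitution $r^2=e^u$, $s^2=e^v$ lands inside the interval on which the relevant convexity was proved, and so that all three evaluations $F(r^2)$, $F(s^2)$, $F(rs)$ make sense simultaneously. For $\sin_p$ this amounts to keeping $r^2,s^2,rs$ in $(0,\pi_p/2)$; since the exponential is monotone it carries intervals to intervals and transfers midpoint concavity faithfully, so the restriction causes no difficulty. For the two hyperbolic functions the natural domain is all of $(0,\infty)$ and no constraint beyond positivity is needed. Once the domains are matched, all three assertions follow at once, since the genuine analytic content — the convexity of the logarithms of the inverse functions and its transfer to the direct functions through Lemma \ref{ku} — has already been carried out in the earlier results.
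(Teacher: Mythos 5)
Your proof is correct and follows essentially the same route as the paper: both reduce each inequality to midpoint convexity/concavity of $\log F(e^x)$ and obtain the needed convexity of $\log(1/\sin_p(e^y))$, $\log(1/\tanh_p(e^y))$ and $\log(\sinh_p(e^{-y}))$ from Lemma \ref{ku}(2) applied to the convex decreasing logarithms of the inverse functions established in the proof of Theorem \ref{pf1}. The only difference is presentational — you state the equivalence with geometric convexity up front and import the convexity facts from the preceding lemma, whereas the paper re-derives the monotonicity by differentiating inside this proof — and your explicit attention to keeping $r^2$, $s^2$, $rs$ inside $(0,\pi_p/2)$ for part (1) is, if anything, more careful than the paper's own statement.
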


\begin{proof} Let $f(z)=\log({\rm arcsin}_p(e^{-z})),\,z>0$. Then
$$f^{'}(z)=-(1-e^{-pz})^{-1/p}/F(1/p,1/p;1+1/p;e^{-pz})<0,$$
$f$ is decreasing and by the proof of Theorem \ref{pf1}
$f$ is convex. By Lemma \ref{ku}(2),
$f^{-1}(y)=\log(1/\sin_p(e^y))$ is convex. This implies that
$$\log\left(\frac{1}{\sin_p(e^{x/2}e^{y/2})}\right)\leq \frac{1}
{2}\left(\log\left(\frac{1}{\sin_p(e^{x})}\right)
+\log\left(\frac{1}{\sin_p(e^{y})}\right)\right)\,,$$
letting $r=e^{x/2}$ and $s=e^{y/2}$, we get the first inequality.

For (2), let
$g(z)=\log({\rm artanh}_p(e^{-z})),\,z>0$ and
$$g^{'}(z)=-1/((1-e^{-pz})F(1,1/p;1+1/p;e^{-pz}))<0,$$
hence $g$ is decreasing and by Theorem \ref{pf1} $g$ is convex.
Then $g^{-1}(y)=\log(1/{\rm artanh}_p(e^y))$ is convex by Lemma \ref{ku}(2),
and (2) follows. Finally, let
$h_1(z)=\log({1/\rm arsinh}_p(e^{z}))$ and
$$h_1^{'}(z)=-1/F\left(1,1/p;1+1/p;\frac{e^{pz}}
{1+e^{pz}}\right)<0.$$
Then $h_1^{-1}(y)=\log(\sinh_p(e^{-y}))$ is decreasing and
convex by Lemma \ref{ku}(2).
This implies that
$$\log(1/\sinh_p(e^{-x/2}e^{-y/2}))\leq (\log(1/\sinh_p(e^{-x}))+
\log(1/\sinh_p(e^{-y})))/2,$$
and (3) holds for $r,s\in(0,\infty)$. Again $h_2(z)=\log(1/{\rm arsinh}_p(e^{-z}))$ and
$$h_2^{'}(z)=(F(1,1/p;1+1/p;1/(1+e^{pz})))^{-1}>0,$$
similarly proof follows from Lemma \ref{ku}(2), this completes the proof of (3).
\end{proof}

\begin{lemma}  For $p>1$, the following relations hold
\begin{enumerate}
\item $\sqrt{\sin_p(r)\sin_p(s)}\leq \sin_p((r+s)/2),\,\,r,s\in (0,\pi_p/2),$
\item $\sqrt{\sinh_p(r)\sinh_p(s)}\leq \sinh_p((r+s)/2),\,\,r,s \in(0,\infty)\,.$
\end{enumerate}
\end{lemma}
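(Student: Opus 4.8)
The plan is to recognize that both assertions are precisely statements of midpoint log-concavity. For a positive function $f$, the inequality $\sqrt{f(r)f(s)}\le f((r+s)/2)$ is equivalent, after taking logarithms, to $\tfrac12(\log f(r)+\log f(s))\le \log f((r+s)/2)$, i.e.\ to the midpoint concavity of $\log f$. Since $\sin_p$ and $\sinh_p$ are smooth and strictly positive on the relevant intervals, it suffices to prove $(\log\sin_p)''\le 0$ on $(0,\pi_p/2)$ and $(\log\sinh_p)''\le 0$ on $(0,\infty)$; the two inequalities then follow from Jensen's inequality applied to the concave functions $\log\sin_p$ and $\log\sinh_p$.

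To carry this out I would first record the differential equations governing the two functions. Since $({\rm arcsin}_p)'(x)=(1-x^p)^{-1/p}$ and $\sin_p=({\rm arcsin}_p)^{-1}$, the inverse function rule gives $\sin_p'(t)=(1-\sin_p(t)^p)^{1/p}$; likewise $\sinh_p'(t)=(1+\sinh_p(t)^p)^{1/p}$. Writing $\phi=\log\sin_p$ and $y=\sin_p(t)$, the first equation yields $\phi'=(1-y^p)^{1/p}/y$, and differentiating once more (using $y'=(1-y^p)^{1/p}$) gives
\[
\phi''=-(1-y^p)^{2/p-1}y^{p-2}-(1-y^p)^{2/p}y^{-2}.
\]
For $0<y<1$, which holds precisely when $t\in(0,\pi_p/2)$, both summands are manifestly negative, so $\log\sin_p$ is concave and (1) follows.

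For $\sinh_p$ the same scheme, with $\psi=\log\sinh_p$ and $y=\sinh_p(t)>0$, gives $\psi'=(1+y^p)^{1/p}/y$ and, after differentiation,
\[
\psi''=(1+y^p)^{2/p-1}y^{p-2}-(1+y^p)^{2/p}y^{-2}=(1+y^p)^{2/p-1}y^{-2}\big(y^p-(1+y^p)\big)=-(1+y^p)^{2/p-1}y^{-2},
\]
which is negative for every $y>0$, so $\log\sinh_p$ is concave and (2) follows. The one delicate point is this second computation: here the two terms have no obvious sign separately, and one must notice the cancellation $y^p-(1+y^p)=-1$ that collapses $\psi''$ to a single negative term.

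Finally, I would remark that the concavity needed for (1) can alternatively be read off from the proof of Lemma \ref{nr}, where $y\mapsto\log\sin_p(e^y)$ is shown to be concave; combined with the monotonicity $(\log\sin_p)'>0$ this already forces $(\log\sin_p)''<0$. The analogous shortcut fails for $\sinh_p$, because there the corresponding composite $y\mapsto\log\sinh_p(e^y)$ is \emph{convex} rather than concave, so the sign of $(\log\sinh_p)''$ cannot be inferred from Lemma \ref{nr} alone. This is exactly why the direct second-derivative computation is the cleaner and uniform route for both parts.
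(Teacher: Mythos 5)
Your argument is correct, and it takes a genuinely different route from the paper. The paper's own proof is a one-line reduction: it invokes Lemma \ref{nr} (the geometric-mean inequalities $\sqrt{\sin_p(r^2)\sin_p(s^2)}\le\sin_p(rs)$ etc., obtained there via log-convexity of $z\mapsto\log{\rm arcsin}_p(e^{-z})$ and the inverse-function convexity lemma), combines it with $\sqrt{rs}\le(r+s)/2$ and monotonicity, and declares both parts done. That chain is clean for part (1): $\sqrt{\sin_p(r)\sin_p(s)}\le\sin_p(\sqrt{rs})\le\sin_p((r+s)/2)$. But for part (2) the relevant inequality in Lemma \ref{nr}(3) points the \emph{wrong} way, $\sinh_p(\sqrt{rs})\le\sqrt{\sinh_p(r)\sinh_p(s)}$, so both sides of the desired inequality merely dominate $\sinh_p(\sqrt{rs})$ and nothing follows by chaining; the paper's "follows easily" is at best incomplete there. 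Your approach --- proving $(\log\sin_p)''\le 0$ and $(\log\sinh_p)''\le 0$ directly from the defining relations $\sin_p'=(1-\sin_p^p)^{1/p}$ and $\sinh_p'=(1+\sinh_p^p)^{1/p}$, with the key cancellation $y^p-(1+y^p)=-1$ in the second case --- is self-contained, handles both parts uniformly, and actually supplies the justification that the paper's argument is missing for $\sinh_p$. Your closing remark is also accurate: concavity of $y\mapsto\log\sin_p(e^y)$ (from the proof of Lemma \ref{nr}) together with $(\log\sin_p)'>0$ does force $(\log\sin_p)''<0$, whereas the corresponding composite for $\sinh_p$ is convex, so no such shortcut is available there. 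The only (minor) cost of your route relative to the paper's is that it does not reuse the already-established machinery for part (1); what it buys is a complete and uniform proof of both parts.
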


\begin{proof} The proof follows easily from Lemma \ref{nr} and $2\sqrt{r\,s}\leq r+s$
since the functions are increasing.

\end{proof}

\begin{lemma} For $p>1$, the following inequalities hold
\begin{enumerate}
\item $\sin_p(r+s)\leq \sin_p(r)+\sin_p(s)\,,\,\, r, s\in(0,\pi_p/4)\,,$
\item $\tanh_p(r+s)\leq \tanh_p(r)+\tanh_p(s)\,,\,\, r, s\in(0,b_p/2) \,,$
\item $\tan_p(r+s)\geq \tan_p(r)+\tan_p(s)\,,\,\, r, s\in(0,b_p/2)\,,$
\item $\sinh_p(r+s)\geq \sinh_p(r)+\sinh_p(s)\,,\,\, r, s\in(0,c_p/2)\,$.
\end{enumerate}
\end{lemma}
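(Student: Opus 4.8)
The plan is to derive each inequality from a concavity or convexity property of the function in question, combined with the fact that each of $\sin_p,\tanh_p,\tan_p,\sinh_p$ vanishes at the origin. First I would record the elementary subadditivity principle: if $\varphi$ is concave on $[0,L)$ with $\varphi(0)=0$, then $\varphi$ is subadditive there, since for $r,s>0$ with $r+s<L$ one writes $r$ and $s$ as convex combinations of $r+s$ and $0$ to obtain $\varphi(r)\ge\frac{r}{r+s}\varphi(r+s)$ and $\varphi(s)\ge\frac{s}{r+s}\varphi(r+s)$, and adding gives $\varphi(r)+\varphi(s)\ge\varphi(r+s)$. The convex case with $\varphi(0)=0$ is identical with all inequalities reversed, yielding superadditivity.

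The core step is to establish the needed concavity/convexity. Each of the four functions is the inverse of one of $\mathrm{arcsin}_p,\mathrm{artanh}_p,\mathrm{arctan}_p,\mathrm{arsinh}_p$, and the derivatives of these are precisely the integrands $(1-t^p)^{-1/p}$, $(1-t^p)^{-1}$, $(1+t^p)^{-1}$, $(1+t^p)^{-1/p}$. On $(0,1)$ the first two integrands increase in $t$, so $\mathrm{arcsin}_p$ and $\mathrm{artanh}_p$ are convex and increasing; by Lemma~\ref{ku}(1) their inverses $\sin_p$ and $\tanh_p$ are concave. The last two integrands decrease in $t$, so $\mathrm{arctan}_p$ and $\mathrm{arsinh}_p$ are concave and increasing; by Lemma~\ref{ku}(3) their inverses $\tan_p$ and $\sinh_p$ are convex. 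Since all four functions vanish at $0$, the principle of the first paragraph applies: $\sin_p$ and $\tanh_p$ are subadditive, while $\tan_p$ and $\sinh_p$ are superadditive.

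It remains only to match the stated ranges, which serve to keep $r+s$ inside the relevant domain. As $\sin_p$ is defined on $(0,\pi_p/2)$, requiring $r,s\in(0,\pi_p/4)$ forces $r+s\in(0,\pi_p/2)$ and gives (1); likewise $r,s\in(0,b_p/2)$ keeps $r+s\in(0,b_p)$, the domain of $\tan_p$, giving (3), and $r,s\in(0,c_p/2)$ keeps $r+s\in(0,c_p)$, the domain of $\sinh_p$, giving (4). For $\tanh_p$ the domain is $(0,\infty)$ and concavity holds throughout, so (2) in fact holds for all $r,s>0$, the restriction $r,s\in(0,b_p/2)$ being merely a convenient (and harmless) subinterval. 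The only point demanding care is getting the orientation right in Lemma~\ref{ku}, i.e.\ pairing ``increasing integrand $\Rightarrow$ convex primitive $\Rightarrow$ concave inverse'' against ``decreasing integrand $\Rightarrow$ concave primitive $\Rightarrow$ convex inverse''; once the four cases are sorted correctly, no further computation is needed.
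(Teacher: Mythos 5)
Your argument is correct and follows essentially the same route as the paper: both establish convexity/concavity of $\mathrm{arcsin}_p$, $\mathrm{artanh}_p$, $\mathrm{arctan}_p$, $\mathrm{arsinh}_p$ from the monotonicity of the integrands, pass to the inverse functions via Lemma~\ref{ku}, and then conclude sub-/superadditivity using that each function vanishes at $0$. The only (harmless) difference is that you prove the subadditivity step directly from the definition of concavity, whereas the paper cites \cite[Thm 1.25, Lem 1.24]{avvb} (via the monotonicity of $f_1(y)/y$); your observation that (2) actually holds for all $r,s>0$ is also correct.
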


\begin{proof} Let $f(x)={\rm arcsin}_p(x)$,\,
$x\in(0,a_p)$. We get
$$f^{'}(x)=(1-x^p)^{-1/p}\,,$$
which is increasing, hence $f$ is convex.
Clearly, $f$ is increasing. Therefore
$$f_1=f^{-1}(y)=\sin_p(y)$$
is concave by Lemma \ref{ku}(1). This implies that
$f_1^{'}$ is decreasing. Clearly $f_1(0)=0$, and by
 \cite[Thm 1.25]{avvb}, $f_1(y)/y$
is decreasing. Now it follows from \cite[Lem 1.24]{avvb}
that
$$f_1(r+s)\leq f_1(r)+f_1(s),$$
and (1) follows. The proofs of the remaining claims follow similarly.
\end{proof}
\section{\bf Proof of main results}

\begin{subsec}{\bf Proof of Theorem \ref{d}.} \rm
By Lemma \ref{avb}(3), (2) we get
$$2-\left(1-\displaystyle\frac{x^p}{p\,(1+p)}\right)<
F\left(\frac{1}{p},\frac{1}{p};1+\frac{1}{p};x^p\right)\,,$$
and the first inequality of part one holds.
For the second one we get
\begin{eqnarray*}
{\rm arcsin}_p\,x&=&x\,F\left(\frac{1}{p},\frac{1}{p};1+\frac{1}{p};x^p\right)\\
               &<&\frac{x\,\Gamma(1+1/p)\Gamma(1+1/p-1/p-1/p)}
               {\Gamma(1+1/p-1/p)\Gamma(1+1/p-1/p)}\\
               &=&x\,\Gamma\left(1+\frac{1}{p}\right)\Gamma\left(1-\frac{1}{p}\right)
               =x\,\frac{1}{p}B\left(1-\frac{1}{p},\frac{1}{p}\right)=x\,\frac{\pi_p}{2}
\end{eqnarray*}
by Lemma \ref{avb}(4).
By \cite[Prop (2.11)]{be}, ${\rm arccos}_p\,x={\rm arcsin}_p\,((1-x^p)^{1/p})$,
 and (2)
follows from (1). For (3), if we replace $b=1,c-a=1/p,\,c=1+1/p,\,x^p=z/(1-z)$
in (\ref{as}) then we get
\begin{eqnarray*}
{\rm arctan}_p\,x&=&x\,F\left(1,\frac{1}{p};1+\frac{1}{p};-x^p\right)\\
&=&\left(\frac{x}{1+x^p}\right)F
\left(1,1;1+\frac{1}{p};\frac{x^p}{1+x^p}\right)\\
&=& \left(\frac{x}{1+x^p}\right)\left(\frac{1}{1+x^p}\right)^{1/p-1}
F\left(\frac{1}{p},\frac{1}{p};1+\frac{1}{p};\frac{x^p}{1+x^p}\right)\\
&=& \left(\frac{x^p}{1+x^p}\right)^{1/p}
F\left(\frac{1}{p},\frac{1}{p};1+\frac{1}{p};\frac{x^p}{1+x^p}\right)\\
&< & 2^{1/p}\,b_p\,\left(\frac{x^p}{1+x^p}\right)^{1/p},
\end{eqnarray*}
third identily and inequality follow from Lemma \ref{avb}(1), (4). For the lower bound we get
\begin{eqnarray*}
{\rm arctan}_p\,x&> & \left(\frac{x^p}{1+x^p}\right)^{1/p}
\left(2-F\left(\frac{1}{p},\frac{1}{p};1+\frac{1}{p};
\frac{x^p}{1+x^p}\right)\right)\\
&> & \displaystyle\frac{(p(1+p)(1+x^p)+x^p)x}{p(1+p)(1+x^p)^{1+1/p}}
\end{eqnarray*}
from Lemma \ref{avb}(3),(2).
\end{subsec}

\begin{subsec}{\bf Proof of Theorem \ref{thm1}.} \rm
For (\ref{thm2}),
we replace $b=1/p,\,c-a=1/p,\,c=1+1/p$ and $x^p=z/(1-z)$ in (\ref{as}) and see that
$${\rm arsinh}_p\,x=x\,F\left(\frac{1}{p}\,,\frac{1}{p}\,;1+\frac{1}{p}\,;-x^p\right)
=\left(\frac{x^p}{1+x^p}\right)^{1/p}
F\left(1,\,\frac{1}{p};\,1+\frac{1}{p};\,\frac{x^p}{1+x^p}\right).$$
Now we get
$$\frac{\log\left(1+x^p\right)}{1+p}\left(\frac{x^p}{1+x^p}\right)^{1/p}< $$
$$x \,F\left(\frac{1}{p}\,,\frac{1}{p}\,;1+\frac{1}{p}\,;-x^p\right)< \left(1-\frac{1}{p}\log\left(1-\frac{x^p}{1+x^p}\right)\right)
\left(\frac{x^p}{1+x^p}\right)^{1/p}$$
from Lemma \ref{avb}(5) and observing that $B(1,1/p)=p,$
this implies (\ref{thm2}).

For (\ref{thm3}) we get from Lemma \ref{avb}(5)
$$\frac{1}{1+p}\log\left(\frac{1}{1-x^p}\right)+1< F\left(1\,,\frac{1}{p}
\,;1+\frac{1}{p}\,;x^p\right)<
\frac{1}{p}\log\left(\frac{1}{1-x^p}\right)+1\,,$$
which is equivalent to
$$x\left(1-\frac{1}{1+p}\log(1-x^p)\right)< x\,
F\left(1\,,\frac{1}{p}\,;1+\frac{1}{p}\,;x^p\right)<
x\left(1-\frac{1}{p}\log(1-x^p)\right)\,,$$
and the result follows.
\end{subsec}
\begin{remark} \rm  For the particular case $p=2$. Zhu \cite{zhu1} has proved for $x>0$
$$\frac{6\sqrt{2}(\sqrt{1+x^2}-1)^{1/2}}{4+\sqrt{2}(\sqrt{1+x^2}+1)^{1/2}}< {\rm arsinh}(x).$$
When $p=2$, our bound in Theorem \ref{thm1}(1) differs from this bound roughly $0.01$
when $x\in(0,1)$.
\end{remark}

\begin{lemma} For $p>1$ and $x\in(0,1)$, the following inequalities hold:\\
\begin{enumerate}
\item ${\rm arctan}_p(x)<{\rm arsinh}_p(x)<{\rm arcsin}_p(x)<{\rm artanh}_p(x)\,,$\\
\item $\tanh_p(z)<\sin_p(z)<\sinh_p(z)<\tan_p(z)\,,$\\
\end{enumerate}
the first and the second inequalities hold for $z\in(0,\pi_p/2)$, and the
third one holds for $z\in(0,b_p)$.
\end{lemma}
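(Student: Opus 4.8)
The plan is to reduce part (1) to a pointwise comparison of the four integrands and then obtain part (2) by a single monotone-inversion argument. For part (1) I would use the integral representations recorded in the introduction, namely ${\rm arctan}_p\,x=\int_0^x(1+t^p)^{-1}dt$, ${\rm arsinh}_p\,x=\int_0^x(1+t^p)^{-1/p}dt$, ${\rm arcsin}_p\,x=\int_0^x(1-t^p)^{-1/p}dt$, and ${\rm artanh}_p\,x=\int_0^x(1-t^p)^{-1}dt$. Since all four share the lower limit $0$, it suffices to order the integrands on $(0,1)$. For $t\in(0,1)$ and $p>1$ we have $0<1-t^p<1<1+t^p$ and $-1<-1/p<0$. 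Because $a^s$ is increasing in $s$ for $a>1$, the smaller exponent gives $(1+t^p)^{-1}<(1+t^p)^{-1/p}$; because $a^{-1/p}$ is decreasing in $a$, the larger base gives $(1+t^p)^{-1/p}<(1-t^p)^{-1/p}$ (equivalently $(1+t^p)^{-1/p}<1<(1-t^p)^{-1/p}$); and because $a^s$ is decreasing in $s$ for $a\in(0,1)$, the more negative exponent gives $(1-t^p)^{-1/p}<(1-t^p)^{-1}$. Integrating the resulting chain $(1+t^p)^{-1}<(1+t^p)^{-1/p}<(1-t^p)^{-1/p}<(1-t^p)^{-1}$ over $(0,x)$ yields part (1).

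For part (2) I would invert. Each inverse function is a strictly increasing homeomorphism of $(0,1)$ onto, respectively, $(0,b_p)$, $(0,c_p)$, $(0,\pi_p/2)$, $(0,\infty)$, and letting $x\to 1^-$ in part (1) gives the nesting $b_p\le c_p\le \pi_p/2$. The elementary fact I would invoke is: if $\phi<\psi$ on $(0,1)$ with both strictly increasing, then $\phi^{-1}>\psi^{-1}$ on the common part of their ranges, since writing $y=\phi(x)$ gives $\psi(x)>y$ and hence $\psi^{-1}(y)<x=\phi^{-1}(y)$. Applying this to the three adjacent pairs coming from part (1): from ${\rm arcsin}_p<{\rm artanh}_p$ one gets $\tanh_p<\sin_p$ on $(0,\pi_p/2)$; from ${\rm arsinh}_p<{\rm arcsin}_p$ one gets $\sin_p<\sinh_p$ on $(0,c_p)$; and from ${\rm arctan}_p<{\rm arsinh}_p$ one gets $\sinh_p<\tan_p$ on $(0,b_p)$. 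Concatenating these gives the chain in (2).

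The argument carries no analytic difficulty; the one step needing care, which I regard as the main obstacle, is the domain bookkeeping, because each inverted inequality is valid only on the smaller of the two image intervals. In particular the middle inequality $\sin_p<\sinh_p$ is meaningful only where $\sinh_p$ is defined, that is on $(0,c_p)$, which by the nesting $c_p\le\pi_p/2$ sits inside $(0,\pi_p/2)$; the outer inequality $\tanh_p<\sin_p$ extends to all of $(0,\pi_p/2)$ since $\tanh_p$ is defined on $(0,\infty)$; and the rightmost inequality $\sinh_p<\tan_p$ lives on $(0,b_p)$, matching the stated interval via $b_p\le c_p$. Thus the proof comes down to keeping track of the chain of image intervals $(0,b_p)\subseteq(0,c_p)\subseteq(0,\pi_p/2)$ and stating each inequality on its natural smallest domain.
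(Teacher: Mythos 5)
Your proposal is correct and follows essentially the same route as the paper, whose entire proof reads ``from the definition of the $p$-analogues functions we get (1), and (2) follows from (1)''; you have simply supplied the pointwise comparison of the four integrands and the monotone-inversion step that the authors leave implicit. Your domain bookkeeping is if anything more careful than the statement itself (which asserts $\sin_p<\sinh_p$ on all of $(0,\pi_p/2)$, implicitly using that ${\rm arsinh}_p$, hence $\sinh_p$, extends beyond the homeomorphism onto $(0,1)$ listed in the introduction), so no changes are needed.
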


\begin{proof} From the definition of the $p$-analogues functions we get (1), and (2)
follows from (1).
\end{proof}


\begin{lemma} For $p>1$, we have
$$\frac{6p^2}{3p^2-2}\leq \pi_p\leq \frac{12 p^2}{6p^2-\pi^2}\,,\,\,\pi_p=\frac{2\pi}{p\,\sin(\pi/p)}\,.$$
\end{lemma}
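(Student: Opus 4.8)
The plan is to reduce the double inequality to elementary estimates for the sine via the substitution $x=\pi/p$. For $p>1$ one has $x=\pi/p\in(0,\pi)$ and
\[
\pi_p=\frac{2\pi}{p\sin(\pi/p)}=\frac{2x}{\sin x},\qquad
\frac{6p^2}{3p^2-2}=\frac{6\pi^2}{3\pi^2-2x^2},\qquad
\frac{12p^2}{6p^2-\pi^2}=\frac{12}{6-x^2}.
\]
Since $x<\pi$, the denominator $3\pi^2-2x^2$ stays positive, so the lower estimate is meaningful for every $p>1$; the upper denominator $6-x^2$ is positive precisely when $x<\sqrt6$, i.e.\ $p>\pi/\sqrt6$, and I would prove the upper bound on that range, where its right-hand side is positive (for $p\le\pi/\sqrt6$ the right-hand side is non-positive while $\pi_p>0$, so the stated upper bound can hold only when $6p^2>\pi^2$).

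Because $\sin x>0$ on $(0,\pi)$ and the relevant denominators are positive, cross-multiplication shows that the two bounds are equivalent, respectively, to
\[
\sin x\le x-\frac{2x^3}{3\pi^2}\qquad\text{and}\qquad \sin x\ge x-\frac{x^3}{6},\qquad x\in(0,\pi).
\]
Both will follow by integrating a single monotonicity fact. First I would record that $u\mapsto\sin u/u$ is positive and strictly decreasing on $(0,\pi)$ (its derivative has numerator $u\cos u-\sin u$, which vanishes at $0$ and whose derivative $-u\sin u$ is negative), and hence that
\[
q(x):=\frac{1-\cos x}{x^2}=\tfrac12\Big(\frac{\sin(x/2)}{x/2}\Big)^2
\]
is strictly decreasing on $(0,\pi)$, running from its limit $\tfrac12$ at $0$ down to $q(\pi)=2/\pi^2$.

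The two estimates then drop out by integration. From $q(t)<\tfrac12$ I get $\cos t>1-t^2/2$ on $(0,\pi)$; integrating over $[0,x]$ gives $\sin x>x-x^3/6$, which is the upper bound for $\pi_p$. From $q(t)>2/\pi^2$ I get $1-\cos t>2t^2/\pi^2$; integrating over $[0,x]$ gives $x-\sin x>2x^3/(3\pi^2)$, i.e.\ $\sin x<x-2x^3/(3\pi^2)$, which is the lower bound for $\pi_p$. Note that integrating the constant lower bound $2/\pi^2$ for $q$ reproduces \emph{exactly} the constant $2/(3\pi^2)$ of the lemma.

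The routine parts are the two integrations and the sign bookkeeping in the cross-multiplications. The only genuinely delicate point is matching the constant $2/(3\pi^2)$ in the lower bound: it is calibrated so that the endpoint value $q(\pi)=2/\pi^2$ survives the integration, and using a sharper (non-constant) lower bound for $q$, such as the decreasing character of $(x-\sin x)/x^3$ with infimum $1/\pi^2$, would in fact yield the stronger lower bound $\pi_p\ge 2p^2/(p^2-1)$. I expect the main obstacle to be conceptual rather than computational, namely noticing that everything reduces to the monotonicity of $\sin u/u$ and keeping track of the fact that the upper inequality is valid only on $6p^2>\pi^2$.
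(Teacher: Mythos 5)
Your proof is correct and ultimately rests on exactly the same two polynomial bounds for the sine that the paper uses, namely
\[
x-\frac{x^3}{6}\;\le\;\sin x\;\le\;x-\frac{2x^3}{3\pi^2},\qquad x=\frac{\pi}{p}\in(0,\pi),
\]
followed by the same cross-multiplication. The difference is in how those bounds are obtained: the paper simply cites them from an external reference (Kl\'en--Visuri--Vuorinen, Theorem 3.1) and says ``the result follows easily,'' whereas you derive them from scratch via the monotonicity of $(1-\cos x)/x^2=\tfrac12\bigl(\sin(x/2)/(x/2)\bigr)^2$ and two integrations. Your version is therefore self-contained, at the cost of a page of elementary calculus; the paper's is shorter but leans on the literature. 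More importantly, your remark about the range of validity is a genuine catch: for $1<p<\pi/\sqrt6$ the right-hand side $12p^2/(6p^2-\pi^2)$ is negative (e.g.\ at $p=1.1$ one has $\pi_p\approx 20.3$ while the claimed upper bound is about $-5.6$), so the upper inequality as stated in the lemma is false there. The paper's derivation implicitly divides by $1-\pi^2/(6p^2)$ without checking its sign, and the lemma should carry the hypothesis $p>\pi/\sqrt6$ for the upper bound; your write-up handles this correctly.
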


\begin{proof} By \cite[Thm 3.1]{kvv} we get
$$\frac{\pi}{p}\left(1-\frac{\pi^2}{6p^2}\right)\leq \sin\left(\frac{\pi}{p}\right)
\leq \frac{\pi}{p}\left(1-\frac{2}{3p^2}\right)\,,$$
and the result follows easily.
\end{proof}

%
%

\begin{lemma}  For $a \in (0,1)$ and $k,r,s\in(1,\infty)\,,$ the following inequalities hold
\begin{enumerate}
\item $\displaystyle\pi_{r\,s}\leq \displaystyle\sqrt{\pi_{r^2}\,\pi_{s^2}}\leq \displaystyle\sqrt{\pi_{r}\,\pi_{s}} \,,$
\item $\pi_{r^a\,s^{1-a}}\leq a\,\pi_{r}+(1-a)\pi_s \,,$
\item $\displaystyle\left(\frac{\pi_s}{\pi_r}\right)^k\leq \displaystyle\frac{\pi_{s^k}}{\pi_{r^k}}$, $\,r\leq s \,.$
\end{enumerate}
\end{lemma}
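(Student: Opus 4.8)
The plan is to reduce all three inequalities to two properties of the single function $\pi_p=2\pi/(p\sin(\pi/p))$ on $(1,\infty)$: that it is \emph{decreasing}, and that $t\mapsto G(t):=\log\pi_{e^t}$ is \emph{convex} on $\mathbb{R}_+$ (equivalently, $\pi_p$ is log-convex in the variable $\log p$). The point of the substitution $p=e^t$, i.e.\ setting $x=\log r$ and $y=\log s$ with $x,y>0$, is that it turns each multiplicative statement in $r,s$ into an additive statement for $G$, where convexity and the weighted arithmetic--geometric mean inequality apply directly.

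First I would establish the two basic properties. Writing $u=\pi e^{-t}=\pi/p\in(0,\pi)$, one differentiates $G(t)=\log(2\pi)-t-\log\sin(\pi e^{-t})$ to obtain $G'(t)=-1+u\cot u$ and
$$G''(t)=u\csc^2 u\Big(u-\tfrac12\sin 2u\Big).$$
Since $\tan u>u$ on $(0,\pi/2)$ and $\cot u<0$ on $(\pi/2,\pi)$, one has $u\cot u<1$ throughout $(0,\pi)$, so $G'<0$ and hence $\pi_p$ is decreasing. And because $\sin v<v$ for all $v>0$ (apply this with $v=2u$), the bracket $u-\tfrac12\sin 2u$ is positive, so $G''>0$ and $G$ is convex. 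This log-convexity computation is the main obstacle, but it becomes routine once the elementary inequality $2u-\sin 2u>0$ is invoked.

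With these in hand the three parts follow quickly. For (1), midpoint convexity of $G$ applied at the points $2x$ and $2y$ gives $2G(x+y)\le G(2x)+G(2y)$, which exponentiates to $\pi_{rs}^{2}\le\pi_{r^2}\pi_{s^2}$, i.e.\ $\pi_{rs}\le\sqrt{\pi_{r^2}\pi_{s^2}}$; the second inequality of (1) is merely $\pi_{r^2}\le\pi_r$ and $\pi_{s^2}\le\pi_s$, which follow from monotonicity since $r^2>r$ and $s^2>s$. For (2), convexity of $G$ gives $G(ax+(1-a)y)\le aG(x)+(1-a)G(y)$, hence $\pi_{r^as^{1-a}}\le\pi_r^{a}\pi_s^{1-a}$, and the weighted arithmetic--geometric mean inequality upgrades the right-hand side to $a\pi_r+(1-a)\pi_s$.

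Finally, for (3) I would apply Lemma \ref{neu} directly to $f(t)=\pi_{e^t}$ with exponent $a=k\ge1$: the log-convexity just proved makes $g(t)=f(t)^k/f(kt)=\pi_{e^t}^{\,k}/\pi_{e^{kt}}$ decreasing on $\mathbb{R}_+$. Evaluating at $t_1=\log r\le t_2=\log s$ (both positive, as $r,s>1$) yields $\pi_s^{k}/\pi_{s^k}\le\pi_r^{k}/\pi_{r^k}$, which rearranges to $(\pi_s/\pi_r)^{k}\le\pi_{s^k}/\pi_{r^k}$, exactly the claim. Thus the whole lemma rests on the single convexity/monotonicity analysis of $G$ in the first step.
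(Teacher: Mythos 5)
Your proposal is correct and follows essentially the same route as the paper: both establish convexity of $t\mapsto\log\pi_{e^t}$ via the same second-derivative computation (your factored form $u\csc^2u\,(u-\tfrac12\sin 2u)$ is exactly the paper's expression $y^2\csc^2y-y\cot y$), then deduce (1) by midpoint convexity plus monotonicity, (2) by convexity, and (3) by Lemma \ref{neu}. Your treatment is in fact slightly more complete, since you verify the monotonicity of $\pi_p$ and the positivity of the bracket explicitly rather than asserting them.
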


\begin{proof} Let $f(x)=\log(\pi_{e^x}),\,x>0$. We get
$$f^{''}(x)=e^{-2 x} \pi ^2 (\csc \left(e^{-x} \pi \right))^2-e^{-x} \pi
   \cot \left(e^{-x} \pi \right)\,,$$
which is positive, because the function $g(y)=y^2(\csc(y))^2-y\cot(y) $ is positive. This implies that
$f$ is convex. Hence

$$\log(\pi_{e^{(x+y)/2}}) \leq \frac{1}{2}\left(\log (\pi_{e^x})+\log(\pi_{e^y})\right)\,,$$
setting $r=e^{x/2}$ and $r=e^{y/2}$, we get the first inequality of (1), and the second one follows from the
fact that $\pi_p$ is decreasing in $p\in(1,\infty)$. Now it is clear that $\pi_{e^x}$ is convex, and we get
$$\pi_{e^{a\,x+(1-a)y}}\leq a\,\pi_{e^x}+(1-a)\pi_{e^y}\,,$$
and (2) follows easily. Let $0\leq x\leq y$, then we get
$$\frac{(\pi_{e^y})^k}{\pi_{e^{k\,y}}}\leq \frac{(\pi_{e^x})^k}{\pi_{e^{k\,x}}}$$
from Lemma \ref{neu}, and (3) follows if we set $r=e^{x}$ and $r=e^{y}$.
\end{proof}

\begin{lemma}
For $p>1$ and $x\in(0,1)$, we have
$${\rm arcsin}_p\left(\frac{x}{\sqrt[p]{1+x^p}}\right)={\rm arctan}_p(x)\,,$$
$${\rm arcsin}_p (x)={\rm arctan}_p\left(\frac{x}{\sqrt[p]{1-x^p}}\right)\,,$$
$${\rm arccos}_p( x)={\rm arctan}_p\left(\frac{\sqrt[p]{1-x^p}}{x}\right)\,,$$
$${\rm arccos}_p\left(\frac{1}{\sqrt[p]{1+x^p}}\right)={\rm arctan}_p (x)\,.$$
\end{lemma}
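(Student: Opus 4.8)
The plan is to establish the first identity directly from the integral definitions and then obtain the remaining three by elementary substitutions, using the relation ${\rm arccos}_p\,x={\rm arcsin}_p((1-x^p)^{1/p})$ recorded in the introduction. No new machinery is needed; everything reduces to a single change of variable plus bookkeeping of the arguments.

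First I would prove
$${\rm arcsin}_p\left(\frac{x}{\sqrt[p]{1+x^p}}\right)={\rm arctan}_p(x)$$
by the change of variable $u=t(1+t^p)^{-1/p}$ in ${\rm arctan}_p(x)=\int_0^x(1+t^p)^{-1}\,dt$. A short differentiation gives $du=(1+t^p)^{-1-1/p}\,dt$, while $u^p=t^p/(1+t^p)$ yields $1-u^p=(1+t^p)^{-1}$, hence $(1-u^p)^{-1/p}\,du=(1+t^p)^{-1}\,dt$. Since $u$ runs from $0$ to $x(1+x^p)^{-1/p}$ as $t$ runs from $0$ to $x$, the integral transforms into $\int_0^{x(1+x^p)^{-1/p}}(1-u^p)^{-1/p}\,du={\rm arcsin}_p\big(x(1+x^p)^{-1/p}\big)$. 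Because $x(1+x^p)^{-1/p}\in(0,1)$ for every $x>0$, the substitution is legitimate and the identity in fact holds for all $x>0$, a fact I will use below.

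For the second identity I would substitute $x\mapsto y:=x(1-x^p)^{-1/p}$ in the first. Since $y^p=x^p/(1-x^p)$ gives $1+y^p=(1-x^p)^{-1}$, one checks directly that $y(1+y^p)^{-1/p}=x$, so the first identity yields ${\rm arcsin}_p(x)={\rm arctan}_p\big(x(1-x^p)^{-1/p}\big)$. Note that $y$ ranges over $(0,\infty)$ as $x\to 1^-$, which is exactly why the extended validity of the first identity is needed here. The third and fourth identities then follow by inserting ${\rm arccos}_p\,x={\rm arcsin}_p((1-x^p)^{1/p})$: setting $w=(1-x^p)^{1/p}$ (so $1-w^p=x^p$), the second identity gives ${\rm arccos}_p(x)={\rm arcsin}_p(w)={\rm arctan}_p\big((1-x^p)^{1/p}/x\big)$; and setting $z=(1+x^p)^{-1/p}$ (so $(1-z^p)^{1/p}=x(1+x^p)^{-1/p}$), the first identity gives ${\rm arccos}_p(z)={\rm arcsin}_p\big(x(1+x^p)^{-1/p}\big)={\rm arctan}_p(x)$.

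The computations are entirely routine; the only point requiring genuine attention is the bookkeeping of ranges, namely verifying that each intermediate argument stays inside the domain of the relevant inverse function (the argument of ${\rm arcsin}_p$ must lie in $(0,1)$, whereas ${\rm arctan}_p$ is defined on all of $(0,\infty)$). In particular, the derivation of the second identity genuinely requires the first identity in its full range $x>0$ rather than merely on $(0,1)$, so I would state the first identity in that sharper form even though the lemma only asserts $x\in(0,1)$.
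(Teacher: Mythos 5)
Your proof is correct, but it follows a genuinely different route from the paper's. The paper works entirely on the level of hypergeometric representations: it writes ${\rm arctan}_p\,x = x\,F(1,\tfrac1p;1+\tfrac1p;-x^p)$, applies the Pfaff-type transformation \eqref{as} to move the argument to $x^p/(1+x^p)$, and then applies the Euler transformation of Lemma \ref{avb}(1) to convert $F(1,1;1+\tfrac1p;\cdot)$ into $F(\tfrac1p,\tfrac1p;1+\tfrac1p;\cdot)$, which is recognized as ${\rm arcsin}_p$; the third identity is proved by the same manipulations and the fourth by substitution. You instead perform the single change of variable $u=t(1+t^p)^{-1/p}$ directly in the defining integral, which is more elementary (no hypergeometric identities needed) and makes the identity transparent. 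Your derivations of the second, third and fourth identities by substitution agree in spirit with the paper, which also obtains the second from the first by writing $y=x/\sqrt[p]{1-x^p}$. A genuine merit of your write-up is that you track the ranges: since $x/\sqrt[p]{1-x^p}$ and $\sqrt[p]{1-x^p}/x$ run over all of $(0,\infty)$, the argument really does require ${\rm arctan}_p$ (equivalently the first identity) on all of $(0,\infty)$, not just on $(0,1)$ as literally stated in the lemma; the paper glosses over this, whereas your integral substitution establishes the needed extension at no extra cost. Both approaches are valid; the paper's buys uniformity with the rest of its hypergeometric machinery, yours buys self-containedness and an explicit handling of the domain issue.
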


\begin{proof}
We get
\begin{eqnarray*}
{\rm arctan}_p(x)&=& x \,F\left(\frac{1}{p},\frac{1}{p};1+\frac{1}{p};-x^p\right)\\
             &=&\frac{x}{1+x^p}\,F\left(1,1;1+\frac{1}{p};\frac{x^p}{1+x^p}\right)\\
             &=&\frac{x}{1+x^p}\left(\frac{1}{1+x^p}\right)^{1/p-1}F\left
             (\frac{1}{p},\frac{1}{p}
             ;1+\frac{1}{p};\frac{x^p}{1+x^p}\right)\\
             &=&\left(\frac{x}{1+x^p}\right)^{1/p}F\left(\frac{1}{p},\frac{1}{p};1+\frac{1}{p}
             ;\left(\frac{x}{(1+x^p)^{1/p}}\right)^p\right)\\
             &=&{\rm arcsin}_p\left(\frac{x}{\sqrt[p]{1+x^p}}\right)
\end{eqnarray*}
by (\ref{as}) and Lemma \ref{avb}(1). Write $y=x/\sqrt[p]{1-x^p}$, and second follows from first one.
 For the third identity, we get
\begin{eqnarray*}
{\rm arctan}_p\left(\frac{\sqrt[p]{1-x^p}}{x}\right)&=& {x} \sqrt[p]{1-x^p}
F\left(\frac{1}{p},\frac{1}{p};1+\frac{1}{p};(1-x^p)\right)\\
&=& {\rm arcsin}_p((1-x^p)^{1/p})={\rm arccos}_p(x)
\end{eqnarray*}
by (\ref{as}), Lemma \ref{avb}(1) and \cite[Prop 2.2]{be}. Similarly, the fourth identity follows from
third one.
\end{proof}

\begin{conjecture} For a fixed $x\in(0,1)$, the functions
$${\rm sin}_p(\pi_p \,x/2),\,{\rm tan}_p(\pi_p\, x/2),\, \sinh_p(c_p\,x)$$
are monotone in $p\in(1,\infty)$. For fixed $x>0$, $\tanh_p(x)$ is increasing in $p\in(1,\infty)$.
\end{conjecture}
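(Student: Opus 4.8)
The plan is to treat each function as the inverse of its associated $p$-arc-function and to follow the value of that inverse as $p$ varies by implicit differentiation, thereby reducing every assertion to the monotonicity in $p$ of a suitable \emph{normalized} arc-function at a fixed argument.

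I would dispose of the last claim first, since it alone has no $p$-dependent scaling and is unconditional. Writing $y=\tanh_p(x)\in(0,1)$ for fixed $x>0$, the defining relation is ${\rm artanh}_p(y)=\int_0^y(1-t^p)^{-1}\,dt=x$. Differentiating in $p$ and using $\frac{\partial}{\partial p}(1-t^p)^{-1}=t^p(\log t)(1-t^p)^{-2}$, which is negative for $t\in(0,1)$, gives
$$
(1-y^p)^{-1}\frac{dy}{dp}=-\int_0^y\frac{t^p\log t}{(1-t^p)^2}\,dt>0,
$$
so $dy/dp>0$ and $\tanh_p(x)$ increases. Here the sign of a single elementary integrand settles the matter.

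For the three scaled functions the reduction is the same but the sign is not elementary. Putting $y=\sin_p(\pi_p x/2)$ and using $\pi_p/2={\rm arcsin}_p(1)$, the defining relation is $G(y,p):={\rm arcsin}_p(y)/{\rm arcsin}_p(1)=x$; likewise $\sinh_p(c_p x)$ and $\tan_p(\pi_p x/2)$ correspond to setting ${\rm arsinh}_p(y)/{\rm arsinh}_p(1)=x$ and $2\,{\rm arctan}_p(y)/\pi_p=x$, the denominators being the natural normalizers $c_p={\rm arsinh}_p(1)$ and $\pi_p/2=\lim_{y\to\infty}{\rm arctan}_p(y)$. Implicit differentiation of $G(y,p)=x$ gives $dy/dp=-G_p/G_y$ with $G_y=(1-y^p)^{-1/p}/{\rm arcsin}_p(1)>0$, so the sign of $dy/dp$ is exactly the opposite of that of $\partial_pG$ at fixed $y$. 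Thus the conjecture for these three functions is equivalent to showing that each normalized arc-function is monotone in $p$ for every fixed $y$.

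To study $\partial_pG$ I would pass to the incomplete-beta form: the substitution $t^p=s$ yields
$$
\frac{{\rm arcsin}_p(y)}{{\rm arcsin}_p(1)}=\frac{\int_0^{y^p}s^{1/p-1}(1-s)^{-1/p}\,ds}{B\!\left(\tfrac1p,1-\tfrac1p\right)}=I_{y^p}\!\left(\tfrac1p,\,1-\tfrac1p\right),
$$
the regularized incomplete beta function with parameters $a=1/p$, $b=1-a$ and argument $z=y^p$, with analogous (if heavier) representations for the hyperbolic and tangent cases via the hypergeometric forms and the transformation (\ref{as}) used throughout Section 3. Differentiating $I_{y^p}(1/p,1-1/p)$ in $p$ produces a negative endpoint contribution $\partial_zI\cdot y^p\log y$ (recall $\log y<0$) together with two parameter contributions $\partial_aI$ and $\partial_bI$, weighted by $da/dp=-1/p^2=-\,db/dp$ and carrying digamma functions and further incomplete-beta integrals. \textbf{The main obstacle} is to prove that this sum keeps a constant sign on all of $(1,\infty)$: the endpoint term and the parameter terms pull against each other, and no sign-definite integrand is available as it was for $\tanh_p$. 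The route I would attempt is to pin down the endpoints (as $p\to1^+$ one finds $y\to1$, while as $p\to\infty$ one has $G(y,p)\to y$, so that numerically $\sin_p(\pi_p x/2)$ decreases from $1$ to $x$) and then to bound the incomplete-beta and digamma factors two-sidedly, using inequalities of the type collected in Lemma \ref{avb}, so as to reduce the sign question to a single inequality in the one variable $a=1/p$. Proving that last inequality rigorously is the step I expect to be genuinely hard, which is why the statement is offered as a conjecture.
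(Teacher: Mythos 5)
The statement you are addressing is offered in the paper as a \emph{conjecture} with no proof, so there is no argument of the authors to compare yours against; your proposal has to be judged on its own. On those terms it splits into a finished part and an unfinished part. The $\tanh_p$ claim is genuinely proved: from ${\rm artanh}_p(y)=x$ with $y=\tanh_p(x)\in(0,1)$, the observation that $\partial_p(1-t^p)^{-1}=t^p(\log t)(1-t^p)^{-2}<0$ on $(0,1)$, together with the implicit function theorem (the $y$-derivative $(1-y^p)^{-1}$ is positive and the $p$-derivative of the integrand is bounded on $[0,y]$ since $y<1$), does give $dy/dp>0$. That is a correct, self-contained resolution of one quarter of the conjecture and goes beyond what the paper establishes. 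Your normalizations ${\rm arcsin}_p(1)=\pi_p/2$, ${\rm arsinh}_p(1)=c_p$ and $\lim_{y\to\infty}{\rm arctan}_p(y)=\pi_p/2$ are also all correct (the last one tacitly extends $\tan_p$ beyond the domain $(0,b_p)$ the paper assigns it, which is the only reading under which the conjectured expression $\tan_p(\pi_p x/2)$ makes sense, since $\pi_p/2>b_p$).

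For the three scaled functions, however, the argument stops exactly where the difficulty begins, as you yourself acknowledge. Reducing the claim to the sign of $\partial_p$ of the regularized incomplete beta function $I_{y^p}(1/p,1-1/p)$ at fixed $y$ is a sound reformulation, and your endpoint computation ($y\to1$ as $p\to1^+$, $y\to x$ as $p\to\infty$, so the expected monotonicity is decreasing) is right; but endpoint behaviour plus a plausible decomposition does not establish monotonicity on all of $(1,\infty)$. The argument-derivative term $\partial_z I\cdot y^p\log y$ and the two parameter-derivative terms genuinely pull in opposite directions, and nothing in the paper (in particular nothing in Lemma \ref{avb}) bounds the digamma and incomplete-beta quantities sharply enough to fix the sign of their sum uniformly in $p$. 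So for $\sin_p(\pi_p x/2)$, $\tan_p(\pi_p x/2)$ and $\sinh_p(c_p x)$ your proposal is a programme rather than a proof, and the concrete missing step is a sign-definite representation, or a uniform two-sided estimate, for the derivative of the regularized incomplete beta function along the curve $(a,b,z)=(1/p,\,1-1/p,\,y^p)$. The conjecture remains open for those three cases.
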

\section{\bf Some relations for elementary functions}
\bigskip

\begin{lemma}\label{fir} For $x\in(0,1)$,
the following functions
$$f_1(k)=\sin(x^k)^{1/k}\,,f_2(k)=\cos(x^k)^{1/k}\,,
f_4(k)=\tanh(z^k)^{1/k}\,,$$
are increasing in $(0,\infty)$.
\end{lemma}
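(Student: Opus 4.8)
The plan is to recognize $f_1$ and $f_4$ as instances of the template already treated in Lemma~\ref{pf}, and to dispatch $f_2$ by a direct logarithmic differentiation, since cosine does not vanish at the origin and therefore escapes that template.

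Recall from the proof of Lemma~\ref{pf} that if $E(y)=\int_0^y g(t)\,dt$ with $g\le 1$ and $g$ decreasing, then $k\mapsto\bigl(E(x^k)\bigr)^{1/k}$ is increasing on $(0,\infty)$ for $x\in(0,1)$. For $f_1$ I would write $\sin y=\int_0^y\cos t\,dt$, so that $g=\cos$; on the relevant range $x^k\in(0,1)\subset(0,\pi/2)$ one has $0<\cos t\le 1$ with $\cos$ decreasing, so the template applies and $f_1$ is increasing. For $f_4$ I would use $\tanh y=\int_0^y(1-\tanh^2 t)\,dt$, so that $g=1-\tanh^2$ is again positive, bounded above by $1$, and decreasing; the same template then gives that $f_4$ is increasing (for the variable in $(0,1)$, the range for which $\log(1/z^k)\ge 0$, exactly as in Lemma~\ref{pf}).

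For $f_2$ I would differentiate directly. Setting $L(k)=\log f_2(k)=\tfrac1k\log\cos(x^k)$, the product and chain rules give
$$L'(k)=-\frac{1}{k^2}\log\cos(x^k)-\frac{x^k\log x}{k}\,\tan(x^k).$$
Both summands are positive: since $x^k\in(0,1)$ we have $\cos(x^k)\in(0,1)$, so $\log\cos(x^k)<0$ and the first term is positive; and since $x\in(0,1)$ we have $\log x<0$ while $x^k,\,k,\,\tan(x^k)>0$, so the second term is positive as well. Hence $L'(k)>0$ and $f_2$ is increasing.

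The only point needing separate care is $f_2$: because $\cos 0=1\neq 0$, cosine cannot be written as $\bigl(E(x^k)\bigr)^{1/k}$ with $E$ an integral starting at $0$, so Lemma~\ref{pf} does not apply. This is a genuine gap rather than a hard one, however — the direct computation above is in fact the simplest of the three, since every factor entering $L'$ has an obvious sign and no estimate on a logarithmic-derivative ratio (of the kind implicit in the $\sin$ and $\tanh$ cases) is required.
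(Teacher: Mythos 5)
Your argument is correct, and for $f_1$ and $f_4$ it takes a genuinely different route from the paper. The paper differentiates each function directly and reduces the sign of the derivative to that of an auxiliary function ($h_1(y)=y\cot(y)\log y-\log\sin y\ge 0$, $h_2(y)=y\tan(y)\log y+\log\cos y\le 0$, $h_3(y)=2y\log y/\sinh(2y)-\log\tanh y\ge 0$); the inequality $h_1\ge 0$ is simply asserted there (it is essentially inequality (\ref{b1}) of Lemma \ref{b}). Your reduction of the $\sin$ case to the integral template of Lemma \ref{pf} --- $g=\cos$ is positive, bounded by $1$ and decreasing on $(0,1)$ --- therefore closes a small logical debt the paper leaves open at this point, and it unifies $f_1$ and $f_4$ with the $p$-analogue machinery. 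Your treatment of $f_2$ coincides with the paper's (your $L'$ is exactly $-h_2(x^k)/k^2$), and your remark that $\cos$ escapes the template because $\cos 0\neq 0$ is the right diagnosis.

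One caveat concerns the range of $z$ in $f_4$. The statement leaves $z$ undefined (a typo), but the paper's own proof establishes monotonicity for all $z\in(0,\infty)$, and that full range is what is used downstream: Lemma \ref{l}(2) needs $\tanh(r)\tanh(s)\le\sqrt{\tanh(r^2)\tanh(s^2)}$ for $r,s\in(0,\infty)$. Your template argument covers only $z\in(0,1)$, since for $z>1$ the factor $\log(1/z^k)$ changes sign and the two summands in the derivative formula of Lemma \ref{pf} no longer have a common sign. The repair is immediate: logarithmic differentiation gives $f_4'(k)=\frac{\tanh(y)^{1/k}}{k^2}\left(\frac{2y\log y}{\sinh(2y)}-\log\tanh y\right)$ with $y=z^k$, and for $y\ge 1$ both terms are already nonnegative, so only the case $y\in(0,1)$ --- which your template handles --- requires any work. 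You should add that one line if the lemma is to support its later application.
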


\begin{proof} We get
$$f^{'}_1(k)=(x^k {\rm cot}(x^k)\log(x^k)-\log(\sin(x^k)))\sin(x^k)^{1/k}/k^2,$$
which is positive because
$$h_1(y)=y \,{\rm cot}(y)\log(y)-\log(\sin(y))\geq 0\,.$$
For $f_2$ we get
$$f^{'}_2(k)=-(x^k \tan(x^k)\log(x^k)+\log(\cos(x^k)))\cos(x^k)^{1/k}/k^2,$$
which is positive because the function
$h_2(y)=y \tan(y)\log(y)+\log(\cos(y))\leq  0$.
 For $f_3$ we get
 $$f^{'}_3(k)=\frac{{\rm tanh}(z^k)^{1/k}}{k^2}
 (2z^k\log(z^k)/\sinh(2z^k)-\log(\tanh(z^k))).$$
 Let $$h_3(y)=2y\log(y)/\sinh(2y)-\log(\tanh(y)),\,y=z^k\in(0,\infty).$$
 Clearly $h_3(y)>0$ for $y>1$. For $y\in(0,1)$ we see that $h_3(y)>0$ iff
 $$\frac{2y}{\sinh(2y)}\frac{\log(y)}{\log(\tanh(y))}\leq 1$$
 which holds because $y>\tanh(y)$. In conclusion, $f^{'}_3(k)>0$ for all $z\in(0,\infty)$.


\end{proof}

\begin{lemma}\label{k5} The following inequalities hold
\begin{enumerate}

\item $\sqrt{{\rm arccos}(r^2){\rm arccos}(s^2)}< {\rm arccos}(r\,s)\,,r,s\in(0,1)$\\

\item ${\rm arctan}(r){\rm arctan}(s)<\sqrt{{\rm arctan}(r^2){\rm arctan}
(s^2)}< {\rm arctan}(r\,s)\,,$\\
for $r,s\in(0,1)$

\item $\sqrt{{\rm arcosh}(r^2)\,{\rm arcosh}(s^2)}<
 {\rm arcosh}(r\,s);\;\,r,s\in(1,\infty)\,.$\\
\end{enumerate}
\end{lemma}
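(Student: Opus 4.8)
The plan is to recycle the log-concavity machinery developed in the proof of Theorem~\ref{pf1}, now applied to the three classical inverse functions $f\in\{{\rm arccos},{\rm arctan},{\rm arcosh}\}$. For each such $f$ set $h_f(x):=\log f(e^x)$ and recall from that proof the criterion that, writing $y=e^x$, one has $h_f''(x)\ge 0$ exactly when $\tfrac{f(y)}{y}\bigl(f'(y)+y f''(y)\bigr)\ge f'(y)^2$. If $h_f$ is \emph{concave} on the relevant range of $y$, then midpoint concavity gives $\log f(e^{(x+y)/2})\ge\tfrac12\bigl(\log f(e^x)+\log f(e^y)\bigr)$, and the substitution $r=e^{x/2}$, $s=e^{y/2}$ (so that $r^2=e^x$, $s^2=e^y$, $rs=e^{(x+y)/2}$) turns this into the geometric-mean bound $\sqrt{f(r^2)f(s^2)}\le f(rs)$. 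This is precisely the right-hand inequality in each of (1), (2), (3), so the whole problem reduces to checking concavity of $h_f$ together with one auxiliary estimate for ${\rm arctan}$.

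First I would verify the three concavity statements. For $f={\rm arccos}$ on $(0,1)$ and $f={\rm arcosh}$ on $(1,\infty)$ the verification is immediate: a direct differentiation gives $f'(y)+yf''(y)=-(1-y^2)^{-3/2}$ and $-(y^2-1)^{-3/2}$ respectively, both negative, while $f(y)>0$; hence $\tfrac{f(y)}{y}(f'+yf'')<0<f'(y)^2$ and $h_f''<0$. For $f={\rm arctan}$ one computes $f'(y)+yf''(y)=(1-y^2)/(1+y^2)^2$ and $f'(y)^2=(1+y^2)^{-2}$, so the criterion becomes $\tfrac{{\rm arctan}(y)}{y}\,(1-y^2)\le 1$, which holds on $(0,1)$ because both ${\rm arctan}(y)/y<1$ and $1-y^2<1$ there. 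In each case the domains match the ranges demanded by the statement, since $r,s\in(0,1)$ forces $r^2,s^2,rs\in(0,1)$ and $r,s\in(1,\infty)$ forces $r^2,s^2,rs\in(1,\infty)$.

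It remains to supply the extra left-hand inequality in (2), namely ${\rm arctan}(r){\rm arctan}(s)<\sqrt{{\rm arctan}(r^2){\rm arctan}(s^2)}$. For this I would invoke the monotonicity argument of Lemma~\ref{pf}: the integrand of ${\rm arctan}$, $g(t)=(1+t^2)^{-1}$, satisfies $g\le 1$ and is decreasing, so the proof of that lemma shows $({\rm arctan}(x^k))^{1/k}$ is increasing in $k$. Taking $k=2$ gives ${\rm arctan}(x)^2\le{\rm arctan}(x^2)$; applying this to $x=r$ and $x=s$ and multiplying yields the claim.

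The main obstacle is conceptual rather than computational: ${\rm arccos}$ and ${\rm arcosh}$ do not arise from the hypergeometric integrands $(1\pm t^p)^{-1/p}$, $(1\pm t^p)^{-1}$ treated in Theorem~\ref{pf1}, so one cannot simply quote that theorem and must instead run the second-derivative test by hand. Pleasantly, because ${\rm arccos}$ is decreasing and ${\rm arcosh}$ grows only logarithmically, the sign of $f'+yf''$ alone settles concavity, so the genuine work is confined to the ${\rm arctan}$ estimate and to bookkeeping of the domains. I would finally remark that the concavity is strict, whence all three inequalities are strict whenever $r\ne s$, with equality on the diagonal $r=s$.
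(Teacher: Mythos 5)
Your argument is correct and follows essentially the same route as the paper: both establish the right-hand inequalities by showing that $x\mapsto\log f(e^{x})$ is concave (you via the criterion $\frac{f}{y}\left(f'+yf''\right)\le (f')^{2}$ recycled from the proof of Theorem~\ref{pf1}, the paper by differentiating $\log f(e^{\pm x})$ twice for each function directly), and both obtain the left-hand inequality in (2) from the monotonicity of $k\mapsto({\rm arctan}(x^{k}))^{1/k}$. If anything your sourcing of that last step is cleaner: you derive it from the integrand criterion behind Lemma~\ref{pf} (equivalently, Lemma~\ref{pf} with $p=2$), whereas the paper cites Lemma~\ref{fir}, whose list does not actually include ${\rm arctan}$; your closing observation that equality occurs on the diagonal $r=s$ is also accurate.
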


\begin{proof}
For (1) we let $f(x)=\log({\rm arccos}(e^{-x}))\;,x>0$, and get
$$f^{''}(x)=-\frac{\sqrt{e^{2x}-1}+e^{2x}{\rm arccos}(e^{-x})}
{(e^{2x}-1)^{3/2}{\rm arccos}^2(e^{-x})}\leq 0,$$
hence $f$ is concave, and the inequality follows.

For (2) we define $g(x)=\log({\rm arcsin}(e^{-x}))\;,x>0$ and obtain
$$g^{''}(x)=\frac{e^x \left(\left(e^{2 x}-1\right) \tan
   ^{-1}\left(e^{-x}\right)-e^x\right)}{\left(e^{2 x}+1\right)^2 \tan
   ^{-1}\left(e^{-x}\right)^2}<0\,,$$
because $y<\tan(y/(1-y^2))$ for $y\in(0,1)$, hence $g$ is concave. Therefore
the first inequality of (2) follows and the second one follows
from Lemma \ref{fir}. Finally we define
$h(x)=\log({\rm arcosh}(e^{x}))\,,\;x>0$ and get
$$h^{''}(x)=-\frac{e^x(e^x\sqrt{e^{2x}-1}-{\rm arcosh}
(e^{x}))}{(e^{2x}-1)^{3/2}\,{\rm arcosh}^2(e^{x})}<0\,.$$
This implies the proof of (3).
 \end{proof}

\begin{lemma}\label{l} For $r,s\in(0,\infty)$, we have
\begin{enumerate}
\item $\cosh(r\,s)< \sqrt{\cosh(r^2)\cosh(s^2)}< \cosh(r)\cosh(s)\,,$\\
here second inequality holds for $r,s\in(0,1)$,

\item $\tanh(r)\tanh(s)<\sqrt{\tanh(r^2)\tanh(s^2)}
<\sqrt{\tanh(r^2\,s^2)}\,.$
\end{enumerate}
\end{lemma}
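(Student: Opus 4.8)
The plan is to exploit the paper's recurring device: for a positive function $F$, the substitution $r=e^{x/2}$, $s=e^{y/2}$ converts a Jensen (midpoint) inequality for $H(x):=\log F(e^x)$ into a statement relating $F(rs)$ and $\sqrt{F(r^2)F(s^2)}$. Indeed, since $e^{(x+y)/2}=rs$, $e^{x}=r^2$, $e^{y}=s^2$, the relation between $H\bigl(\tfrac{x+y}{2}\bigr)$ and $\tfrac12\bigl(H(x)+H(y)\bigr)$ is exactly the relation between $F(rs)$ and $\sqrt{F(r^2)F(s^2)}$; so convexity of $H$ yields $F(rs)\le\sqrt{F(r^2)F(s^2)}$ and concavity yields the reverse, with equality precisely at $r=s$. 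Two one-line computations drive everything: for $f(x):=\log\cosh(e^x)$ one gets $f''(x)=e^x\tanh(e^x)+e^{2x}/\cosh^2(e^x)>0$, so $f$ is convex; for $G(x):=\log\tanh(e^x)$ one gets $G'(x)=2e^x/\sinh(2e^x)$, which decreases in $x$ because $w/\sinh w$ decreases, so $G$ is concave.

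For part (1), the first inequality $\cosh(rs)\le\sqrt{\cosh(r^2)\cosh(s^2)}$ is immediate from convexity of $f$ (strict for $r\neq s$, with equality at $r=s$). For the second inequality I would argue elementarily: if $r\in(0,1)$ then $r^2<r$, so monotonicity of $\cosh$ gives $\cosh(r^2)<\cosh(r)$, and since $\cosh(r)>1$ this gives $\cosh(r^2)<\cosh(r)^2$; multiplying by the analogous bound for $s$ and taking square roots yields $\sqrt{\cosh(r^2)\cosh(s^2)}<\cosh(r)\cosh(s)$.

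For part (2) I would reduce both inequalities to the single-variable fact $\tanh^2 t<\tanh(t^2)$ for $t>0$. Applying it to $r$ and to $s$ and multiplying gives $\tanh(r)^2\tanh(s)^2<\tanh(r^2)\tanh(s^2)$, which is the first inequality. For the second, concavity of $G$ gives $\sqrt{\tanh(r^2)\tanh(s^2)}\le\tanh(rs)$, and the single-variable fact at $t=rs$ gives $\tanh(rs)<\sqrt{\tanh((rs)^2)}=\sqrt{\tanh(r^2s^2)}$; chaining these yields the claim. The crux, and what I expect to be the main obstacle, is therefore $\tanh^2 t<\tanh(t^2)$. I would settle it with the identity
\[
\operatorname{arctanh}(\tanh^2 t)=\tfrac12\log\frac{1+\tanh^2 t}{1-\tanh^2 t}=\tfrac12\log\cosh(2t),
\]
the last step using $1+\tanh^2 t=\cosh(2t)/\cosh^2 t$ and $1-\tanh^2 t=1/\cosh^2 t$. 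Since $\operatorname{arctanh}$ is increasing, $\tanh^2 t<\tanh(t^2)$ is equivalent to $\tfrac12\log\cosh(2t)<t^2$, i.e.\ to $\cosh(2t)<e^{2t^2}$; this is the Gaussian bound $\cosh x\le e^{x^2/2}$ with $x=2t$ (strict for $t>0$, e.g.\ by comparing Maclaurin coefficients, $4^n/(2n)!\le 2^n/n!$ since $2^n n!\le(2n)!$). This completes the argument.
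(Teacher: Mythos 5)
Your argument is correct, and its backbone---convexity of $\log\cosh(e^x)$ for part (1) and concavity of $\log\tanh(e^x)$ for part (2), converted into the stated inequalities via $r=e^{x/2}$, $s=e^{y/2}$---is exactly the device the paper uses (the paper splits each case into the two functions $\log F(e^{x})$ and $\log F(e^{-x})$ on $x>0$, which amounts to the same thing; your single computation $f''(x)=e^x\tanh(e^x)+e^{2x}/\cosh^2(e^x)$ and the monotone-derivative argument $G'(x)=2e^x/\sinh(2e^x)$ are tidier). Where you genuinely diverge is in the remaining halves. For the second inequality of (1) the paper appeals to the monotonicity of $k\mapsto\cosh(x^k)^{1/k}$ (Lemma \ref{cos1}), whereas your chain $\cosh(r^2)<\cosh(r)<\cosh(r)^2$ for $r\in(0,1)$ is elementary and self-contained. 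For the first inequality of (2) the paper cites the monotonicity of $k\mapsto\tanh(z^k)^{1/k}$ from Lemma \ref{fir}; you instead isolate the single-variable fact $\tanh^2 t<\tanh(t^2)$ and prove it independently via ${\rm artanh}(\tanh^2 t)=\tfrac12\log\cosh(2t)$ together with the Gaussian bound $\cosh x< e^{x^2/2}$, which is a clean alternative proof of the same content. Most notably, for the second inequality of (2) the paper only records the concavity computations and asserts the conclusion, although concavity by itself yields only $\sqrt{\tanh(r^2)\tanh(s^2)}\le\tanh(rs)$; your explicit extra step $\tanh(rs)<\sqrt{\tanh(r^2s^2)}$ (again from $\tanh^2t<\tanh(t^2)$ at $t=rs$) is precisely what is needed to reach the inequality as literally stated, so on this point your chain is more complete than the paper's. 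A minor caveat shared with the statement itself: the convexity/concavity steps degenerate to equalities at $r=s$, which you correctly flag.
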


\begin{proof}
For (1) we let
$g_1(x)=\log(\cosh(e^{-x}))$ and $g_2(x)=\log(\cosh(e^{x}))\,,\;x>0$, and we get
$$g_1^{''}(x)=e^{-2x}(1/(\cosh^2(e^{-x}))+e^x\tanh(e^{-x}))>0,$$
$$g_2^{''}(x)=e^{x}(e^x/(\cosh^2(e^{x}))+\tanh(e^{x}))>0,$$
hence $g_1$ and $g_2$ are convex, and the first inequality of (1)
holds, and its second inequality follows from Lemma \ref{fir}.
The firstinequality of (2) follows from Lemma \ref{fir}. For the second one
let $h_1(x)=\log(\tanh(e^{-x})),\,x>0$ and get
$$e^{-2 x} \left(-\text{csch}^2\left(e^{-x}\right)+2 e^x
   \text{csch}\left(2
   e^{-x}\right)-\text{sech}^2\left(e^{-x}\right)\right)$$
which is negative, hence $h_1$ is concave. Again, let $h_2(x)=\log(\tanh(e^{x}))$ and get
$$-e^x \left(e^x \text{csch}^2\left(e^x\right)-2 \text{csch}\left(2
   e^x\right)+e^x \text{sech}^2\left(e^x\right)\right)<0\,.$$
   This implies that $h_2$ is also concave, and the second inequality of (2) holds for
   $r,s\in(0,\infty)$.

\end{proof}


%
%
%
%
%
%

\begin{lemma}\label{b} For $y\in(0,1)$, we have
\begin{equation}\label{b1}
\frac{\pi}{2}\,y\cot\left(\frac{\pi\,y}{2}\right)\log\,y\leq
\log\left(\sin\left(\frac{\pi\,y}{2}\right)\right)\,,
\end{equation}
\begin{equation}\label{b2}
y\coth\left(y\right)\log\,y\leq
\log\left(\sinh\left(y\right)\right)\,,
\end{equation}

\begin{equation}\label{b3}
\log \left(\tan \left(\frac{\pi  y}{2}\right)\right)\geq\frac{ \pi}{2}  y
   \log (y) \csc \left(\frac{\pi  y}{2}\right) \sec \left(\frac{\pi
   y}{2}\right)\,.
\end{equation}
\end{lemma}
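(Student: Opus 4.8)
The plan is to prove all three inequalities by a single device. Write $L(y)$ for the logarithm of the function on the larger side of each inequality, so that $L(y)=\log\sin(\pi y/2)$ in (\ref{b1}), $L(y)=\log\sinh y$ in (\ref{b2}), and $L(y)=\log\tan(\pi y/2)$ in (\ref{b3}). A direct differentiation shows that the coefficient multiplying $\log y$ on the smaller side is in each case exactly the logarithmic derivative $L'$: one has $\frac{d}{dy}\log\sin(\pi y/2)=\frac{\pi}{2}\cot(\pi y/2)$, $\frac{d}{dy}\log\sinh y=\coth y$, and, using $\csc\theta\,\sec\theta=2/\sin(2\theta)$, $\frac{d}{dy}\log\tan(\pi y/2)=\frac{\pi}{2}\csc(\pi y/2)\sec(\pi y/2)=\pi/\sin(\pi y)$. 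Consequently each of (\ref{b1}), (\ref{b2}), (\ref{b3}) is precisely the statement
\[
\Phi(y):=L(y)-y\,(\log y)\,L'(y)\ge 0,\qquad y\in(0,1).
\]

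The reason for this reformulation is that the derivative collapses: differentiating and cancelling the $L'$ terms gives $\Phi'(y)=-(\log y)\,(yL'(y))'$. Since $-\log y>0$ on $(0,1)$, the sign of $\Phi'$ is that of $(yL')'$, which I would then compute case by case. For (\ref{b1}), $yL'=\frac{\pi}{2}y\cot(\pi y/2)$ and $(yL')'=\frac{\pi}{4\sin^2(\pi y/2)}(\sin\pi y-\pi y)<0$ because $\sin\theta<\theta$, so $\Phi$ is decreasing. For (\ref{b2}), $yL'=y\coth y$ and $(yL')'=\bigl(\tfrac12\sinh 2y-y\bigr)/\sinh^2 y>0$ because $\sinh\theta>\theta$, so $\Phi$ is increasing. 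For (\ref{b3}), $yL'=\pi y/\sin\pi y$ and $(yL')'=\pi(\sin\pi y-\pi y\cos\pi y)/\sin^2\pi y>0$, since $\sin\theta-\theta\cos\theta$ vanishes at $\theta=0$ and has derivative $\theta\sin\theta>0$ on $(0,\pi)$; hence $\Phi$ is again increasing.

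It then remains to pin down $\Phi$ by one boundary value. In (\ref{b1}) both $\cot(\pi/2)$ and $\log\sin(\pi/2)$ vanish, so $\Phi(1)=0$, and as $\Phi$ decreases this forces $\Phi>0$ on $(0,1)$. In (\ref{b2}) and (\ref{b3}) I would instead use the limit $y\to0^+$: the expansions $y\coth y=1+\tfrac{y^2}{3}+\cdots$ and $\pi y/\sin\pi y=1+\tfrac{(\pi y)^2}{6}+\cdots$, together with $\log\sinh y=\log y+\tfrac{y^2}{6}+\cdots$ and $\log\tan(\pi y/2)=\log y+\log(\pi/2)+\cdots$, give $\Phi(0^+)=0$ for (\ref{b2}) and $\Phi(0^+)=\log(\pi/2)>0$ for (\ref{b3}); since $\Phi$ is increasing in both cases, $\Phi>0$ throughout, which is exactly what is claimed.

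Every computation here is elementary, resting only on the standard inequalities $\sin\theta<\theta$, $\sinh\theta>\theta$ and $\tan\theta>\theta$ (the last in the guise $\sin\theta>\theta\cos\theta$). The two places needing genuine care are the identification of the common form $\Phi=L-y(\log y)L'$, which is what makes $\Phi'$ factor neatly as $-(\log y)(yL')'$, and the limits at $y\to0^+$ in (\ref{b2}) and (\ref{b3}): naive substitution there produces the indeterminate forms $0\cdot(-\infty)$ and $(-\infty)+(+\infty)$, so one must verify through the Taylor expansions above that the $\log y$ singularities cancel exactly. I expect this cancellation to be the main obstacle, the rest of the argument being a routine sign analysis.
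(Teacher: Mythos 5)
Your proof is correct and follows essentially the same route as the paper: in each case the paper forms the difference of the two sides, finds that its derivative factors as $\log(1/y)$ times the derivative of $yL'(y)$ (exactly your $\Phi'=-(\log y)(yL')'$ up to sign), and closes with the same boundary data, namely the value $0$ at $y=1$ for (\ref{b1}) and the limits $0$ and $\log(\pi/2)$ as $y\to0^+$ for (\ref{b2}) and (\ref{b3}). Your only addition is to package the three computations as instances of one identity, which tidies the argument (and avoids a sign typo in the paper's treatment of (\ref{b3})) but does not change it.
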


\begin{proof} Let
$f(y)=\frac{\pi}{2}\,y\cot\left(\frac{\pi\,y}{2}\right)\log\,y-
\log\left(\sin\left(\frac{\pi\,y}{2}\right)\right)\,.$
We get
\begin{eqnarray*}
f^{'}(y)&=& \frac{\pi}{2}\cot\left(\frac{\pi\,y}{2}\right)\log\,y-
\frac{1}{4}\,y\pi^2\csc^2\left(\frac{\pi\,y}{2}\right)\log \,y\\
&=& \frac{\pi}{2}\log(y^{-1})\left(\frac{\pi\,y}{2}\frac{1}{\sin^2(\pi\,y/2)}-
\frac{\cos(\pi\,y/2)}{\sin(\pi\,y/2)}\right)\\
&=& \frac{\pi}{2}\frac{\log(y^{-1})}{\sin^2(\pi\,y/2)}\left(\frac{\pi\,y}{2}-
\sin\left(\frac{\pi\,y}{2}\right)\cos\left(\frac{\pi\,y}{2}\right)\right)\\
&=& \frac{\pi}{2}\frac{\log(y^{-1})}{\sin^2(\pi\,y/2)}\left(\frac{\pi\,y}{2}-
\frac{\sin(\pi\,y)}{2}\right).
\end{eqnarray*}
This is positive because $x\geq\sin\,x$ for $x\in(0,2\pi)$, and $f(1)=0$ and
 this completes the proof. Next, let
$$g(y)=y\coth\left(y\right)\log\,y-
\log\left(\sinh\left(y\right)\right)\,.$$
We get
$$g^{'}(y)=\frac{ \log(1/y)}{\sinh^2(y)}\left(y-
\sinh(y)\cosh(y)\right)\leq 0,$$
because $\sinh x \geq x/\cosh x$ for $x>0$. Moreover, $g$ tends to
zero when $y$ tends to zero and this implies the proof of
(\ref{b2}). Next, let
$$h(y)=\log \left(\tan \left(\frac{\pi  y}{2}\right)\right)-\frac{ \pi}{2}  y
   \log (y) \csc \left(\frac{\pi  y}{2}\right) \sec \left(\frac{\pi
   y}{2}\right).$$
We see that
\begin{eqnarray*}
h^{'}(y)&=&-\frac{\pi ^2}{4}  y \log (y) \sec ^2\left(\frac{\pi
   y}{2}\right)+\frac{1}{4} \pi ^2 y \log (y) \csc ^2\left(\frac{\pi
   y}{2}\right)\\
   & &-\frac{\pi}{2}   \log (y) \csc \left(\frac{\pi
   y}{2}\right) \sec \left(\frac{\pi  y}{2}\right)\\
&=&\pi  \log \left(\frac{1}{y}\right) \csc ^2(\pi  y) (\sin (\pi  y)-\pi  y
   \cos (\pi  y))\leq 0,
\end{eqnarray*}
because $x\leq \tan x$ for $x\in(0,1)$. Hence $h$ is increasing and tends to $\log(\pi/2)$ when
$y$ tends to zero and this implies the proof.
\end{proof}

\begin{lemma}\label{a0a}
\begin{enumerate}
\item The function
$$H(y)=\frac{1}{2} \pi  \log \left(\frac{1}{y^y}\right) \cot \left(\frac{\pi  y}
{2}\right)-\log \left(\csc \left(\frac{\pi  y}{2}\right)\right)\,$$
is decreasing from $(0,1)$ onto $(0,\log(\pi/2))$.\\
\item The function
$$G(y)=\log \left(\cosh \left(\frac{\pi  y}{2}\right)\right)-\frac{1}{2} \pi  y
   \log (y) \tanh \left(\frac{\pi  y}{2}\right)$$
is increasing from $(0,1)$ onto $(0,\pi\log(\cosh(\pi/2))/2)$.
\end{enumerate}
\end{lemma}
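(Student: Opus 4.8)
The plan is to treat the two parts separately, reducing part (1) to computations already made in the proof of Lemma \ref{b} and handling part (2) by a direct differentiation.

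For part (1), the first step is to put $H$ into a recognizable form. Since $\log(1/y^y)=-y\log y$ and $\log(\csc(\pi y/2))=-\log(\sin(\pi y/2))$, one checks immediately that $H(y)=-f(y)$, where
$$f(y)=\frac{\pi}{2}\,y\cot\!\left(\frac{\pi y}{2}\right)\log y-\log\!\left(\sin\!\left(\frac{\pi y}{2}\right)\right)$$
is precisely the auxiliary function appearing in the proof of Lemma \ref{b}. There it is shown that $f'(y)\ge 0$ on $(0,1)$ and that $f(1)=0$; consequently $H=-f$ is decreasing on $(0,1)$ with $H(1)=0$. It then remains only to evaluate the left endpoint $\lim_{y\to 0^+}H(y)$. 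Writing $x=\pi y/2$ and $\log(\sin(\pi y/2))=\log(\pi/2)+\log y+\log(\sin x/x)$, I would regroup the two divergent contributions into
$$H(y)=\log y\,\bigl(1-x\cot x\bigr)+\log(\sin x/x)+\log\frac{\pi}{2}.$$
Since $1-x\cot x=x^2/3+O(x^4)=O(y^2)$ and $\log(\sin x/x)\to 0$, both variable terms vanish as $y\to 0^+$, leaving $\log(\pi/2)$. Together with monotonicity and $H(1)=0$ this yields that $H$ decreases from $(0,1)$ onto $(0,\log(\pi/2))$.

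For part (2) there is no such shortcut, so I would differentiate $G$ outright. Using $(\log\cosh(\pi y/2))'=\frac{\pi}{2}\tanh(\pi y/2)$ and the product rule on $y\log y\,\tanh(\pi y/2)$, the two copies of $\frac{\pi}{2}\tanh(\pi y/2)$ cancel and one is left with
$$G'(y)=-\frac{\pi}{2}\,\log y\left[\tanh\!\left(\frac{\pi y}{2}\right)+\frac{\pi}{2}\,y\,\text{sech}^2\!\left(\frac{\pi y}{2}\right)\right].$$
On $(0,1)$ the factor $\log y$ is negative while the bracket is manifestly positive, so $G'(y)>0$ and $G$ is increasing. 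The endpoints then follow by substitution: as $y\to 0^+$ both $\log\cosh(\pi y/2)$ and $y\log y\,\tanh(\pi y/2)$ tend to $0$, so $G(0^+)=0$, whereas at $y=1$ the factor $\log 1=0$ annihilates the second term and $G(1)=\log(\cosh(\pi/2))$. Hence $G$ increases from $(0,1)$ onto $(0,\log(\cosh(\pi/2)))$.

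The only genuinely delicate point is the $y\to 0^+$ limit in part (1): the pieces $-\frac{\pi}{2}y\log y\cot(\pi y/2)$ and $\log(\sin(\pi y/2))$ each blow up like $\mp\log y$, so the whole difficulty lies in organizing their cancellation and verifying that the remainder is of order $y^2\log y$ and therefore tends to $0$. Everything else is either an appeal to Lemma \ref{b} or a routine derivative computation.
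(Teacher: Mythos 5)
Your argument is correct and in substance identical to the paper's: both parts are proved by showing the sign of the derivative, and your derivative of $G$ agrees exactly with the one in the paper, namely $G'(y)=-\frac{\pi}{2}\log (y)\bigl(\tanh(\pi y/2)+\frac{\pi}{2}y\,\mathrm{sech}^2(\pi y/2)\bigr)>0$. For part (1) the paper differentiates $H$ directly and obtains $H'(y)=-\frac{\pi}{4}\csc^2(\pi y/2)\bigl(\pi y\log(1/y)-\sin(\pi y)\log(1/y)\bigr)$, which is exactly the negative of the computation of $f'$ in Lemma \ref{b}; your observation that $H=-f$ simply makes this reuse explicit and avoids repeating the calculation (it also sidesteps the paper's slip of calling $H'$ ``positive'' while concluding that $H$ decreases). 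Your careful treatment of the limit $H(0^+)=\log(\pi/2)$, via $H(y)=\log y\,(1-x\cot x)+\log(\sin x/x)+\log(\pi/2)$ with $x=\pi y/2$, fills in a step the paper dismisses with ``the limiting values follow easily,'' and the cancellation of the two $\log y$ singularities is indeed the only delicate point.

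One discrepancy is worth flagging: your endpoint computation gives $G(1^-)=\log(\cosh(\pi/2))$, so the range of $G$ is $(0,\log(\cosh(\pi/2)))$, whereas the lemma as printed asserts the range $(0,\pi\log(\cosh(\pi/2))/2)$. Your value is the correct one (numerically $\log\cosh(\pi/2)\approx 0.92$, while the printed value is $\approx 1.44$); the statement carries a spurious factor $\pi/2$, apparently copied from the neighbouring Lemma \ref{aa}. So your proof establishes the corrected statement rather than the literal one, which is the right thing to do, but you should say explicitly that the stated range contains a typographical error.
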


\begin{proof} We get
\begin{eqnarray*}
H^{'}(y)&=&-\frac{ \pi }{4} \csc ^2\left(\frac{\pi  y}{2}\right)
\left(\pi  \log \left(y^{-y}\right)+\log (y) \sin (\pi  y)\right)\\
&=&-\frac{\pi}{4}   \csc ^2\left(\frac{\pi  y}{2}\right)
\left(\pi \,y \log (1/y)- \sin (\pi  y)\log (1/y)\right),\\
\end{eqnarray*}
which is positive. Next,
$$G^{'}(y)=-\frac{1}{2} \pi  \log (y) \tanh \left(\frac{\pi
   y}{2}\right)-\frac{1}{4} \pi ^2 y \log (y)
   \text{sech}^2\left(\frac{\pi  y}{2}\right)>0\,,$$
    and the limiting values follow easily.
\end{proof}

\begin{lemma}\label{aa} The following function is increasing from
$(0,1)$ onto $(0,\pi(\log(\pi/2))/2)$
$$g(x)=\frac{x}{\sqrt{1-x^2}}\log\left(\frac{1}{x}\right)-{\rm arcsin}(x)
\log\left(\frac{1}{{\rm arcsin}(x)}\right)\,.$$
In particular,
$$x^{x/\sqrt{1-x^2}}< {\rm arcsin}(x)^{{\rm arcsin}(x)}
 < \left(\frac{\pi}{2}\right)^{\pi/2}\, x^{x/\sqrt{1-x^2}}\,.$$
\end{lemma}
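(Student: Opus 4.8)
The plan is to show $g'>0$ on $(0,1)$ and to evaluate the two boundary limits; the displayed ``in particular'' inequalities then follow at once by exponentiating the resulting bounds $0<g(x)<\tfrac{\pi}{2}\log(\pi/2)$.

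First I would differentiate. Writing $w(x)=x/\sqrt{1-x^2}$, one checks $w'(x)=(1-x^2)^{-3/2}$, while with $a(x)={\rm arcsin}\,x$ we have $a'(x)=(1-x^2)^{-1/2}$. Differentiating the two summands of $g$ and collecting terms, the pieces $\pm(1-x^2)^{-1/2}$ coming from the product rules cancel, and I expect the clean form
\[
g'(x)=\frac{1}{\sqrt{1-x^2}}\left(\frac{\log(1/x)}{1-x^2}-\log\frac{1}{{\rm arcsin}\,x}\right).
\]
Since the prefactor is positive, proving that $g$ is increasing reduces to the single inequality $\dfrac{\log(1/x)}{1-x^2}>\log\dfrac{1}{{\rm arcsin}\,x}$ for $x\in(0,1)$.

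Next I would substitute $x=\sin\theta$, so that $\theta={\rm arcsin}\,x\in(0,\pi/2)$, $1-x^2=\cos^2\theta$, and $\log(1/x)=-\log\sin\theta$. Multiplying the bracket inequality by $\cos^2\theta>0$ and by $-1$ turns it into the equivalent statement $\log\sin\theta<\cos^2\theta\,\log\theta$ on $(0,\pi/2)$, which I would prove by two cases. For $\theta\in[1,\pi/2)$ it is immediate: $\sin\theta<1$ forces $\log\sin\theta<0$, whereas $\log\theta\ge0$ gives $\cos^2\theta\log\theta\ge0$. For $\theta\in(0,1)$ I would use the chain $\log\sin\theta<\log\theta<\cos^2\theta\log\theta$, where the first inequality is $\sin\theta<\theta$ under the increasing function $\log$, and the second holds because $\log\theta<0$ while $0<\cos^2\theta<1$. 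Hence $g'>0$ throughout $(0,1)$.

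Finally I would compute the endpoint limits. As $x\to0^+$ both summands vanish, since $t\log(1/t)\to0$, giving $g(0^+)=0$. As $x\to1^-$ the first summand also tends to $0$: here the apparent $\infty\cdot0$ indeterminacy is resolved by $\log(1/x)\sim(1-x)$ together with $\sqrt{1-x^2}\sim\sqrt{2(1-x)}$, so $w(x)\log(1/x)\sim\sqrt{(1-x)/2}\to0$, while ${\rm arcsin}\,x\to\pi/2$ yields $g(1^-)=-\tfrac{\pi}{2}\log(2/\pi)=\tfrac{\pi}{2}\log(\pi/2)$. Thus $g$ is a strictly increasing continuous map of $(0,1)$ onto $(0,\tfrac{\pi}{2}\log(\pi/2))$. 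The ``in particular'' assertion follows by exponentiation: $g(x)>0$ reads $x^{x/\sqrt{1-x^2}}<{\rm arcsin}(x)^{{\rm arcsin}(x)}$, and $g(x)<\tfrac{\pi}{2}\log(\pi/2)$ reads ${\rm arcsin}(x)^{{\rm arcsin}(x)}<(\pi/2)^{\pi/2}x^{x/\sqrt{1-x^2}}$. The main obstacle is bookkeeping rather than conceptual: carrying the derivative simplification carefully so that the constant terms cancel into the displayed form, and justifying the $x\to1^-$ limit where the first summand is an indeterminate product.
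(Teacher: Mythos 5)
Your proof is correct and follows the same route as the paper: differentiate, show $g'>0$, and evaluate the endpoint limits, and your expression for $g'$ is algebraically identical to the paper's
$g'(x)=\log\bigl({\rm arcsin}(x)^{1-x^2}/x\bigr)/(1-x^2)^{3/2}$.
In fact your write-up is tighter than the paper's: the substitution $x=\sin\theta$ with the two-case verification of $\log\sin\theta<\cos^2\theta\,\log\theta$ supplies the justification that the paper dismisses as ``clearly positive'', and your limit $g(1^-)=\tfrac{\pi}{2}\log(\pi/2)$ corrects the paper's slip that $g$ ``tends to zero when $x$ tends to zero and $1$'', which would contradict the stated range.
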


\begin{proof} We get
\begin{eqnarray*}
g^{'}(x)&=&-\frac{x^2 \log (x)}{\left(1-x^2\right)^{3/2}}-\frac{\log
   (x)}{\sqrt{1-x^2}}+\frac{\log \left({\rm arcsin}(x)\right)}{\sqrt{1-x^2}}\\
   &=&\frac{\log(1/x)-(1-x^2)\log(1/{\rm arcsin}(x))}{(1-x^2)^{3/2}}\\
   &=&\frac{\log({\rm arcsin}(x)^{(1-x^2)}/x)}{(1-x^2)^{3/2}}\,,
\end{eqnarray*}
which is clearly positive, and $g$ tends to zero when $x$ tends to zero and $1$.
\end{proof}

\begin{lemma}\label{aaa} For $x\in(0,1)\,,$  the following functions
$$f(k)=\sin\left(\frac{\pi}{2}\,x^k \right)^{1/k},\,\,
g(k)=\tan\left(\frac{\pi}{2}\,x^k \right)^{1/k},\,\,
h(k)=\sinh\left(x^k\right)^{1/k}\,,$$
are decreasing in $(0,\infty)$. In particular, for $k\geq 1$
$$\sqrt[k]{\sin\left(\frac{\pi}{2}\,x^k \right)}\leq \sin\left(\frac{\pi}{2}\,x\right)\leq
\sin\left(\frac{\pi}{2}\sqrt[k]{x}\right)^k,$$
$$\sqrt[k]{\tan\left(\frac{\pi}{2}\,x^k \right)}\leq \tan\left(\frac{\pi}{2}\,x\right)\leq
\tan\left(\frac{\pi}{2}\sqrt[k]{x}\right)^k,$$
$$\sqrt[k]{\sinh\left(x^k\right)}\leq \sinh\left(x\right)\leq
\sinh\left(\sqrt[k]{x}\right)^k.$$
\end{lemma}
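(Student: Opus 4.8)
The plan is to prove monotonicity of each of the three functions by passing to logarithms and differentiating in $k$; in every case the sign of the derivative will be governed by exactly one of the three inequalities of Lemma \ref{b}, read with $y=x^k$. Since $x\in(0,1)$ forces $x^k\in(0,1)$ for all $k>0$, Lemma \ref{b} is always applicable with this choice of $y$.

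Take $f(k)=\sin\!\big(\tfrac{\pi}{2}x^k\big)^{1/k}$ first and set $u=\tfrac{\pi}{2}x^k$. Then $\log f(k)=\tfrac1k\log\sin u$, and the one bookkeeping point is to rewrite $\log x=\tfrac1k\log(x^k)$ so as to extract a common factor $1/k^2$. Differentiating gives
$$\frac{f'(k)}{f(k)}=\frac{1}{k^2}\Big(u\cot(u)\,\log(x^k)-\log\sin(u)\Big).$$
Writing $y=x^k$ and $u=\tfrac{\pi y}{2}$, the bracket is precisely $\tfrac{\pi}{2}\,y\cot(\tfrac{\pi y}{2})\log y-\log\sin(\tfrac{\pi y}{2})$, which is $\le 0$ by (\ref{b1}). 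Hence $f'\le 0$ and $f$ decreases on $(0,\infty)$.

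The computations for $g$ and $h$ are identical in shape. For $g(k)=\tan\!\big(\tfrac{\pi}{2}x^k\big)^{1/k}$ the same steps give $g'(k)/g(k)=k^{-2}\big(u\log(x^k)/(\sin u\cos u)-\log\tan u\big)$, and with $y=x^k$ the bracket is $\le 0$ by (\ref{b3}); for $h(k)=\sinh(x^k)^{1/k}$ one obtains $h'(k)/h(k)=k^{-2}\big(y\coth(y)\log y-\log\sinh y\big)$ with $y=x^k$, which is $\le 0$ by (\ref{b2}). Thus $g$ and $h$ are decreasing as well. There is no genuine obstacle beyond the routine differentiation: the substantive content has been pushed into Lemma \ref{b}, so the only thing to watch is the factorization $\log x=\tfrac1k\log(x^k)$ that exposes the $1/k^2$ and recasts each bracket into the $y=x^k$ form of Lemma \ref{b}.

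Finally, the displayed ``in particular'' inequalities follow from monotonicity alone. Since each function is decreasing on $(0,\infty)$ and $\tfrac1k\le 1\le k$ for $k\ge 1$, evaluating $f$ at $1/k$, $1$, and $k$ yields $f(k)\le f(1)\le f(1/k)$, i.e.\ $\sqrt[k]{\sin(\tfrac{\pi}{2}x^k)}\le\sin(\tfrac{\pi}{2}x)\le\sin(\tfrac{\pi}{2}\sqrt[k]{x})^k$, and the corresponding chains for $g$ and $h$ follow in the same way.
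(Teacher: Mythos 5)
Your proposal is correct and follows essentially the same route as the paper: differentiate the logarithm, factor out $1/k^2$ via $\log x=\tfrac1k\log(x^k)$, and reduce the sign of each bracket to the inequalities of Lemma \ref{b} with $y=x^k$. The only cosmetic difference is that for $f$ the paper invokes the positivity of $H$ in Lemma \ref{a0a}(1), which is just inequality (\ref{b1}) restated, so the arguments coincide; your explicit derivation of the ``in particular'' chains from $f(k)\le f(1)\le f(1/k)$ is the same (implicit) step the paper relies on.
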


\begin{proof} We get
\begin{eqnarray*}
f^{'}(k)&=& \sqrt[k]{\sin \left(\frac{\pi  x^k}{2}\right)}
 \left(\frac{\pi  x^k \log (x) \cot \left(\frac{\pi  x^k}{2}\right)}{2
   k}-\frac{\log \left(\sin \left(\frac{\pi  x^k}{2}\right)\right)}{k^2}\right)\\
&=&-\frac{1}{2\,k^2}\sqrt[k]{\sin \left(\frac{\pi  x^k}{2}\right)}
\left(\pi k \,x^k \log (1/x) \cot \left(\frac{\pi  x^k}{2}\right)-2 \log \left(1/\sin
   \left(\frac{\pi  x^k}{2}\right)\right)\right),\\
\end{eqnarray*}
which is negative by Lemma \ref{a0a}(1). Next, we get
$$
g^{'}(k)=\sqrt[k]{\tan \left(\frac{\pi  x^k}{2}\right)} \left(\frac{\pi  x^k \log
   (x) \csc \left(\frac{\pi  x^k}{2}\right) \sec \left(\frac{\pi
   x^k}{2}\right)}{2 k}-\frac{\log \left(\tan \left(\frac{\pi
   x^k}{2}\right)\right)}{k^2}\right)\leq 0,
$$
by (\ref{b3}). Finally,
\begin{eqnarray*}
h^{'}(k)&=&\sqrt[k]{\sinh \left(x^k\right)} \left(\frac{x^k \log (x) \coth
   \left(x^k\right)}{k}-\frac{\log \left(\sinh
   \left(x^k\right)\right)}{k^2}\right)\\
&=&\sqrt[k]{\sinh \left(x^k\right)} \left(x^k \log (x^k) \coth
   \left(x^k\right)-\log \left(\sinh \left(x^k\right)\right)\right)(1/k^2),\\
\end{eqnarray*}
which is negative by inequality (\ref{b2}), and this completes the proof.
\end{proof}

%

\begin{lemma}\label{cos1} The following functions
$$f(k)=\cos\left(\frac{\pi}{2}x^{1/k}\right)^k\,, x\in (0,1)\,,$$
$$g(k)=\cosh\left(x^k\right)^{1/k},\,\, x\in (0,1)\,,$$
$$h(k)={\rm arcosh}\left(\frac{\pi}{2}
\,x^{k}\right)^{1/k},\,\, x\in (1,\infty)\,,$$
are decreasing in $(0,\infty)$. In particular, for $k\geq 1$
$$\cos\left(\frac{\pi}{2}\sqrt[k]{x}\right)^k\leq\cos\left(\frac{\pi}{2}
\,x\right)\leq\sqrt[k]{\cos\left(\frac{\pi}{2}\,x^k\right)}
\,,$$
$$\sqrt[k]{\cosh\left(x^k\right)}\leq\cosh\left(
x\right)\leq\cosh\left(\sqrt[k]{x}\right)^k
\,,$$
$$\sqrt[k]{{\rm arcosh}\left(\frac{\pi}{2}\,x^k\right)}\leq{\rm arcosh}
\left(\frac{\pi}{2}\,x\right)\leq{\rm arcosh}\left(\frac{\pi}{2}\sqrt[k]{x}\right)^k
.$$
\end{lemma}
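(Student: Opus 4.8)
The plan is to compute, for each of the three functions, the logarithmic derivative in $k$ and to fix its sign. For $f$ and $g$ the sign is forced directly by $x\in(0,1)$, whereas the ${\rm arcosh}$-case is the only one carrying real content and will be reduced to the concavity established in the proof of Lemma~\ref{k5}(3).

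First I would treat $f(k)=\cos(\frac{\pi}{2}x^{1/k})^{k}$. Differentiating $\log f(k)=k\log\cos(\frac{\pi}{2}x^{1/k})$ gives
\[
\frac{f'(k)}{f(k)}=\log\cos\Big(\frac{\pi}{2}x^{1/k}\Big)+\frac{\pi\,x^{1/k}\log x}{2k}\,\tan\Big(\frac{\pi}{2}x^{1/k}\Big).
\]
For $x\in(0,1)$ one has $x^{1/k}\in(0,1)$, so $\frac{\pi}{2}x^{1/k}\in(0,\pi/2)$, whence $\log\cos(\cdot)<0$; and since $\log x<0$ while $\tan(\cdot)>0$ the second term is negative as well, so $f'(k)<0$. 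The same mechanism settles $g(k)=\cosh(x^{k})^{1/k}$: from $\log g(k)=\frac1k\log\cosh(x^{k})$ I obtain
\[
\frac{g'(k)}{g(k)}=\frac{1}{k^{2}}\Big(x^{k}\log(x^{k})\tanh(x^{k})-\log\cosh(x^{k})\Big),
\]
and for $x\in(0,1)$ we have $x^{k}\in(0,1)$, so $\log(x^{k})<0$ renders the first summand negative, while $\log\cosh(x^{k})>0$ renders the second negative. In contrast to the $\sinh$ case of Lemma~\ref{aaa}, the inequality $\cosh\ge1$ keeps $\log\cosh\ge0$, so no auxiliary estimate such as (\ref{b2}) is needed here.

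The essential case is $h(k)={\rm arcosh}(\frac{\pi}{2}x^{k})^{1/k}$ with $x>1$. Here I would set $Q(s):=\log{\rm arcosh}(\frac{\pi}{2}e^{s})$ and $t:=k\log x$; since $\log x>0$ is then a fixed constant and $t$ ranges over $(0,\infty)$, one has $\log h(k)=(\log x)\,Q(t)/t$, so it suffices to prove that $t\mapsto Q(t)/t$ decreases on $(0,\infty)$. Because $\frac{\pi}{2}e^{s}=e^{\,s+\log(\pi/2)}$, the function $Q$ is merely a horizontal translate of $x\mapsto\log{\rm arcosh}(e^{x})$, which was shown to be concave in the proof of Lemma~\ref{k5}(3); hence $Q$ is concave. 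Moreover $Q(0)=\log{\rm arcosh}(\pi/2)>0$, since $\pi/2>\cosh 1=(e+e^{-1})/2$ forces ${\rm arcosh}(\pi/2)>1$. For a concave differentiable $Q$ the graph lies below its tangent at $t$, and evaluating that tangent at $0$ gives $Q(0)\le Q(t)-tQ'(t)$; thus $tQ'(t)-Q(t)\le-Q(0)<0$ and $(Q(t)/t)'=(tQ'(t)-Q(t))/t^{2}<0$, so $h$ is decreasing.

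The main obstacle is exactly this last step: one must notice that the factor $\pi/2$ inside the ${\rm arcosh}$ only translates the argument of $\log{\rm arcosh}(e^{x})$, so the concavity already proved in Lemma~\ref{k5}(3) transfers to $Q$, and then check that $\pi/2$ is large enough ($\pi/2>\cosh 1$) to guarantee $Q(0)>0$, which is precisely the positivity that the tangent-line inequality converts into monotonicity of $Q(t)/t$. Once the three functions are known to decrease, each ``in particular'' chain follows from monotonicity alone: for $k\ge1$ one has $1/k\le1\le k$, so evaluating the relevant decreasing function at $1/k$, $1$, and $k$ and reading off the resulting inequalities yields the three displayed double inequalities.
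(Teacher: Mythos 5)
Your proposal is correct, and the $f$-case coincides with the paper's computation verbatim: the same two manifestly negative terms $\log\cos(\frac{\pi}{2}x^{1/k})$ and $\frac{\pi x^{1/k}\log x}{2k}\tan(\frac{\pi}{2}x^{1/k})$. For $g$ you diverge usefully: the paper merely asserts that ``the proof of $g$ follows from Lemma \ref{k5}(1)'' (a reference to an $\arccos$ inequality that does not obviously yield monotonicity of $\cosh(x^k)^{1/k}$ in $k$), whereas your direct derivative $\frac{1}{k^2}\bigl(x^k\log(x^k)\tanh(x^k)-\log\cosh(x^k)\bigr)$ with both terms negative (using $\cosh\ge 1$) is self-contained and settles the point cleanly. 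For $h$, the paper introduces the auxiliary function $j(y)={\rm arcosh}(y)\log({\rm arcosh}(y))-y\log(2y/\pi)/\sqrt{y^2-1}$ and shows $j'>0$ together with $j(\pi/2)>0$; your route instead observes that $Q(s)=\log{\rm arcosh}(\frac{\pi}{2}e^{s})$ is a translate of the function whose concavity is established in the proof of Lemma \ref{k5}(3), and converts concavity plus $Q(0)=\log{\rm arcosh}(\pi/2)>0$ into monotonicity of $Q(t)/t$ via the tangent-line inequality. These are in substance the same fact --- indeed $j(y)={\rm arcosh}(y)\bigl(Q(t)-tQ'(t)\bigr)$ under $y=\frac{\pi}{2}e^{t}$, and both arguments hinge on the same numerically tight input ${\rm arcosh}(\pi/2)>1$, i.e.\ $\pi/2>\cosh 1$ --- but your version recycles an already-proved concavity instead of a fresh monotonicity computation, which is tidier and less error-prone. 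The derivation of the three displayed chains from $f(k)\le f(1)\le f(1/k)$ for $k\ge 1$ is exactly as in the paper.
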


\begin{proof}
We get
$$f^{'}(x)=\cos ^k\left(\frac{1}{2} \pi  x^{1/k}\right) \left(\frac{\pi
   x^{1/k} \log (x) \tan \left(\pi
   x^{1/k}/2\right)}{2 k}+\log \left(\cos \left(\pi
   x^{1/k}/2\right)\right)\right)\leq 0\,$$
and proof of $g$ follows from Lemma \ref{k5}(1).

Finally, for $y\geq \pi/2$, let
$$j(y)={\rm arcosh}\left(
y\right)\log\left({\rm arcosh}\left(y
\right)\right)-\frac{y\log(2y/\pi)}{\sqrt{y^2-1}},$$
and $$j^{'}(y)=\frac{\log \left(2 y/\pi \right)}
{\left(y^2-1\right)^{3/2}}+\frac{\log \left({\rm arcosh}
   (y)\right)}{\sqrt{y^2-1}}> 0,$$
   and $$j(\pi/2)={\rm arcosh}(\pi/2)\log({\rm arcosh}(\pi/2))\equiv 0.0235\,.$$
With $z=x^{k}$ we get
$$h^{'}(x)=\frac{{\rm arcosh}\left(\pi
z /2\right)^{1/k}}{k^2{\rm arcosh}\left(\pi
z/2\right)}\left(\frac{\pi z\log(z)}{2\sqrt{(\pi z/2)^2-1}}-
{\rm arcosh}\left(\frac{\pi}{2}
z/2\right)\log\left({\rm arcosh}\left(\frac{\pi }{2}
\right)\right)\right)\,.$$
This is negative, because $j(y)>0$ for $y>\pi/2$.
\end{proof}

%
%
%
%


\begin{lemma}\label{k1} The following relations hold
\begin{enumerate}
\item $\sin(r)\sin(s)<\sqrt{\sin (r^2)\sin (s^2)},\,r,s\in(0,1)$\,,\\

\item $\cos(r)\cos(s)<\sqrt{\cos (r^2)\cos (s^2)}
< \cos(r\,s)\,,$

\item $\tan(r)\tan(s)>
\sqrt{\tan (r^2)\tan (s^2)}> \tan(r\,s)$,\\
the first inequalities in (2) and (3) hold for $r,s\in(0,\sqrt{\pi/2})$,
and second ones for $r,s\in(0,1)$.
\end{enumerate}
\end{lemma}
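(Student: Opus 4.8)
The plan is to treat the two inequalities in each part by two different mechanisms, exactly as in the proofs of Lemmas \ref{fir}, \ref{k5} and \ref{l}. The left-hand (product) inequalities in (1)--(3) I would obtain by comparing the monotone functions $k\mapsto(\sin(x^k))^{1/k}$, $k\mapsto(\cos(x^k))^{1/k}$, $k\mapsto(\tan(x^k))^{1/k}$ at $k=1$ and $k=2$ and then multiplying the $r$- and $s$-instances. By Lemma \ref{fir} the first two maps increase, so $\sin x<\sqrt{\sin(x^2)}$ and $\cos x<\sqrt{\cos(x^2)}$, and multiplying yields the first inequalities of (1) and (2). For (3) I would first record the companion fact that $k\mapsto(\tan(x^k))^{1/k}$ is \emph{decreasing}; by the same differentiation as in Lemma \ref{fir} this reduces to the one-variable estimate $\log\tan y\ge 2y\log y/\sin(2y)$ (the tangent analogue of (\ref{b2})), giving $\tan x>\sqrt{\tan(x^2)}$ and hence, after multiplication, the first inequality of (3).

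The right-hand (geometric mean versus $\cos(rs)$, $\tan(rs)$) inequalities in (2) and (3) I would obtain from the log-convexity method of Theorem \ref{pf1}. Put $G(x)=\log\cos(e^x)$ and $H(x)=\log\tan(e^x)$ and write $u=e^x$. A direct computation gives
\[
G''(x)=-u\bigl(\tan u+u\sec^2u\bigr),\qquad
H''(x)=\frac{2u\bigl(\sin 2u-2u\cos 2u\bigr)}{\sin^2 2u}.
\]
For $u\in(0,\pi/2)$ the first expression is negative, so $G$ is concave, while in the second $\sin v-v\cos v>0$ on $(0,\pi)$ (its derivative is $v\sin v>0$ and it vanishes at $0$), so $H$ is convex. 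Choosing nodes $2\log r$, $2\log s$ with midpoint $\log r+\log s=\log(rs)$, the midpoint concavity of $G$ gives $\sqrt{\cos(r^2)\cos(s^2)}<\cos(rs)$ and the midpoint convexity of $H$ gives $\sqrt{\tan(r^2)\tan(s^2)}>\tan(rs)$. Since $r^2,s^2,rs<\pi/2$ for $r,s\in(0,\sqrt{\pi/2})$, these second inequalities in fact persist on the whole interval $(0,\sqrt{\pi/2})$.

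The delicate point — and where I expect the real obstacle — is the precise range of validity of the first (product) inequalities, not their derivation. Those arguments rest on the single-variable estimates $\cos^2x<\cos(x^2)$ and $\tan^2x>\tan(x^2)$, which are genuine only on $(0,1)$: for $\cos$ the auxiliary function $y\tan y\log y+\log\cos y$ of Lemma \ref{fir} changes sign once $y>1$, and indeed at $r=s=1.2\in(0,\sqrt{\pi/2})$ one has $\cos^2(1.2)\approx0.1313>0.1304\approx\cos(1.44)$, so the first inequality of (2) fails there; the same degeneration occurs for (3) as $x\to\sqrt{\pi/2}$, where $\tan(x^2)$ blows up. Thus I would prove the first inequalities of (2) and (3) on $(0,1)$, where the monotonicity of Lemma \ref{fir} and its tangent analogue are available, while the second inequalities, depending only on the global concavity of $G$ and convexity of $H$, hold up to $\sqrt{\pi/2}$. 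In short, the two ranges stated in the lemma should be interchanged, and reconciling the claimed domain $(0,\sqrt{\pi/2})$ with the product inequalities is the crux of the proof.
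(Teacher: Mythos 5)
Your proposal follows essentially the same route as the paper: the first inequalities come from the monotonicity in $k$ of $\sin(x^k)^{1/k}$, $\cos(x^k)^{1/k}$, $\tan(x^k)^{1/k}$ evaluated at $k=1,2$ (the paper cites Lemmas \ref{fir}, \ref{cos1} and \ref{aaa} for this), and the second inequalities come from midpoint concavity/convexity of $\log\cos(e^{x})$ and $\log\tan(e^{x})$ exactly as the paper does with the reparametrized functions $\log\cos(\pi e^{-x}/2)$ and $\log\tan(\pi e^{-x}/2)$; your second-derivative computations are correct. Your further observation about the domains is also correct and is something the paper overlooks: $\cos^2(1.2)\approx 0.1313>0.1304\approx\cos(1.44)$, so the first inequality of (2) genuinely fails inside $(0,\sqrt{\pi/2})$, and the first inequality of (3) fails as $r=s\to\sqrt{\pi/2}$ since $\tan(x^2)\to\infty$, whereas the log-convexity argument for the second inequalities works on all of $(0,\sqrt{\pi/2})$; the two stated ranges should indeed be interchanged, which is consistent with what the paper's own proof (via Lemma \ref{fir} for $x\in(0,1)$) actually establishes.
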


\begin{proof} Clearly (1) and the fist inequality of (2) follwos
from Lemmas \ref{fir}and \ref{cos1},
respectively.
Let $g(x)=\log(\cos(\pi\,e^{-x}/2)),\, x>0$, we get
\begin{eqnarray*}
g{''}(x)&=&-\frac{\pi ^2}{4} \,e^{-2 x}  \sec ^2\left(\frac{e^{-x} \pi
   }{2}\right)-\frac{\pi}{2}\, e^{-x}   \tan \left(\frac{e^{-x} \pi
   }{2}\right)\\
   &=&-\frac{\pi}{4} \,e^{-2 x}   \sec ^2\left(\frac{e^{-x} \pi
   }{2}\right) \left(e^x \sin \left(e^{-x} \pi \right)+\pi \right)\leq 0,
\end{eqnarray*}
   and the second inequality of (2) follows.

    For (3), we define
   $h(x)=\log(\tan(\pi\,e^{-x}/2)),\, x>0$, and we get
 $$
h{''}(x)=e^{-x} \pi  \left(1-e^{-x} \pi  \cot \left(e^{-x} \pi
   \right)\right) \csc \left(e^{-x} \pi \right)\geq 0,$$
hence $h$ is convex, and the second inequality follows easily, and the
first one follows from
Lemma \ref{aaa}.
\end{proof}

\begin{lemma}\label{c} For a fixed $x\in (0,1)$, the function
$g(k)=(\cos\,kx+\sin kx)^{1/k}$
is decreasing in $(0,1)$.
\end{lemma}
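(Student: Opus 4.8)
The plan is to pass to the logarithm and reduce the monotonicity statement to a one-variable sign condition. Setting $p(k)=\cos(kx)+\sin(kx)$, so that $\log g(k)=\tfrac1k\log p(k)$, I would first record $p'(k)=x\bigl(\cos(kx)-\sin(kx)\bigr)$ and differentiate to obtain
$$\frac{g'(k)}{g(k)}=\frac{1}{k^2}\left(\frac{k\,p'(k)}{p(k)}-\log p(k)\right).$$
Since $g(k)>0$ and $k^2>0$, the sign of $g'(k)$ is exactly the sign of the bracketed expression; note here that $p(k)=\cos(kx)+\sin(kx)>0$ for $k,x\in(0,1)$, because $0<kx<1<\pi/2$ makes both summands positive. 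Thus it suffices to prove that the bracket is negative on $(0,1)$.

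Next I would introduce the single variable $\theta=kx$, which ranges in $(0,1)$ as $k,x\in(0,1)$, and write $v(\theta)=\cos\theta+\sin\theta$, with $v'(\theta)=\cos\theta-\sin\theta$. The bracket then becomes $H(\theta)=\theta\,v'(\theta)/v(\theta)-\log v(\theta)$, and the goal is $H(\theta)<0$ on $(0,1)$. Since $H(0)=0-\log 1=0$, it is enough to show that $H$ is strictly decreasing on $(0,1)$, which reduces the whole problem to a sign check on $H'$.

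The derivative computation is where the argument becomes clean rather than presenting an obstacle. Differentiating and cancelling the two copies of $v'/v$ gives $H'(\theta)=\theta\,(v''v-(v')^2)/v^2$, and I would then exploit the two elementary identities $v''(\theta)=-v(\theta)$ and $v^2+(v')^2=(1+\sin 2\theta)+(1-\sin 2\theta)=2$. Together these yield
$$H'(\theta)=\theta\cdot\frac{-v^2-(v')^2}{v^2}=-\frac{2\theta}{1+\sin 2\theta}<0$$
for $\theta\in(0,1)$, where $1+\sin 2\theta=v^2>0$. Hence $H(\theta)<H(0)=0$, so the bracket is negative, $g'(k)<0$, and $g$ is strictly decreasing on $(0,1)$. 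The only point requiring any care is the positivity of $v$ (equivalently of $p$) on the relevant range, and this follows immediately from $0<\theta<1<\pi/2$; no sharp auxiliary estimate is needed, so I expect the entire argument to be short and self-contained.
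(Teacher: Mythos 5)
Your proof is correct. The overall strategy is the same as the paper's: differentiate $\log g$, substitute $\theta=kx$, and reduce everything to showing that $H(\theta)=\theta v'(\theta)/v(\theta)-\log v(\theta)$, with $v(\theta)=\cos\theta+\sin\theta$, is negative on $(0,1)$ because it decreases from $H(0)=0$. The key step, however, is handled differently. The paper clears the denominator and studies $h(\theta)=v\log v-\theta v'$, proving $h'(\theta)=\theta v+v'\log v\ge 0$ by invoking the bound $v\le 1.1442$ and the auxiliary inequality $e^{\theta v}>v^{\sin\theta}$ (together with $v\ge1$, so that the term $\cos\theta\,\log v$ is nonnegative). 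You instead differentiate $H$ directly and exploit the exact identities $v''=-v$ and $v^2+(v')^2=2$ to collapse $H'$ to the closed form $-2\theta/(1+\sin 2\theta)$, which is manifestly negative. Your route is cleaner and entirely self-contained: no numerical estimate or exponential inequality is needed. It also gets the signs right, whereas the paper's write-up says ``to prove that this is positive'' and concludes ``$g'(k)\ge 0$'', which literally contradicts the assertion that $g$ is decreasing; the intended conclusion is $g'(k)\le 0$, which is exactly what your computation (and, after correcting the typo, the paper's) establishes.
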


\begin{proof}
Differentiation yields
$$g^{'}(k)=(\sin (k x)+\cos (k x))^{\frac{1}{k}} \left(\frac{k x (\cos (k x)-\sin
   (k x))}{\sin (k x)+\cos (k x)}-\log (\sin (k x)+\cos (k x))\right)/k^2\,.$$
To prove that this is positive, we let
$z=k\,x\,,y=\cos\,z+\sin\,z\leq 1.1442$\,
$$h(z)=(\cos\,z+\sin\,z)\log(\cos\,z+\sin\,z)-z(\cos\,z-\sin\,z)\,,$$
and observe that
\begin{eqnarray*}
h^{'}(z)&=&z\,\cos\,z+(\cos\,z-\sin\,z)\log(\cos\,z+\sin\,z)+z\,\sin\,z\\
&=&zy+\log\,y^{\cos\,z}-\log\,y^{\sin\,z}\geq 0,
\end{eqnarray*}
because $e^{zy}>y^{\sin\,z}$. This implies that $g^{'}(k)\geq 0$.
\end{proof}
\section{\bf Appendix}

In the following tables we give the values of $p$-analogue functions for some specific values of its domain with $p=3$ computed with Mathematica$^\registered\,.$ For instance,
we can define \cite{ru}

\bigskip
{\tt

arcsinp[p\_, x\_] := x *Hypergeometric2F1[1/p, 1/p, 1 + 1/p, x\^{ }p]

sinp[p\_, y\_] := x /. FindRoot[ arcsinp[p, x] == y, \{x, 0.5 \}]

}

\bigskip

\begin{displaymath}
\begin{array}{|c|c|c|c|c|c|}
\hline
x&{\rm arcsin_p}(x)&{\rm arccos_p}(x)&{\rm arctan_p}(x)&{\rm arsinh_p}(x)&{\rm artanh_p}(x)\\
\hline

0.00000 & 0.00000 & 1.20920 & 0.00000 & 0.00000&0.00000 \\

0.25000&0.25033&1.17782&0.24903&0.24968&0.25099\\

0.50000&0.50547&1.07974&0.48540&0.49502&0.51685\\

0.75000&0.78196&0.88660&0.68570&0.72710&0.85661\\

1.00000&1.20920&0.00000&0.83565&0.93771&\infty\\

\hline
\end{array}
\end{displaymath}

\bigskip

\begin{displaymath}
\begin{array}{|c|c|c|c|c|c|}
\hline
x&{\rm sin_p}(x)&{\rm cos_p}(x)&{\rm tan_p}(x)&{\rm sinh_p}(x)&{\rm tanh_p}(x)\\
\hline

0.00000 & 0.00000 & 1.00000 & 0.00000 & 0.00000&0.00000 \\

0.25000&0.24967&0.99478&0.25098&0.25033&0.24903\\

0.50000&0.49476&0.95788&0.51652&0.50518&0.48517\\

0.75000&0.72304&0.85362&0.84704&0.77588&0.68283\\

1.00000&0.91139&0.62399&1.46058&1.08009&0.82304\\

\hline
\end{array}
\end{displaymath}

\bigskip

With a normalization different from ours, some eigenvalue problems of the
$p$-Laplacian have been studied in \cite{br}.

\bigskip

\end{document}